\documentclass[11pt,fullpage]{article}


\usepackage{fullpage}
\usepackage{amsthm}
\usepackage{amsmath}
\usepackage{amsfonts}
\usepackage{rotating}
\usepackage{authblk}






\newtheorem{theorem}{Theorem}
\newtheorem{lemma}[theorem]{Lemma}
\newtheorem{corollary}[theorem]{Corollary}





\title{\bf The Hamiltonian Circuit Polytope}



\author[1]{Latife Gen\c{c}-Kaya}
\author[2]{J. N. Hooker}
\affil[1]{Groupon Inc., \texttt{latife@gmail.com}}
\affil[2]{Carnegie Mellon University, \texttt{jh38@andrew.cmu.edu}}

\date{Revised December 2018}

\begin{document}

\maketitle

\begin{abstract}
The hamiltonian circuit polytope is the convex hull of feasible solutions for the \mbox{circuit} constraint, which provides a succinct formulation of the traveling salesman and other sequencing problems.  We study the polytope by establishing its \mbox{dimension}, developing tools for the identification of facets, and using these tools to derive several families of facets.  The tools \mbox{include} necessary and sufficient conditions for an inequality to be facet defining, and an algorithm for generating all undominated circuits.  We use a novel approach to identifying families of facet-defining inequalities, based on the structure of variable indices rather than on subgraphs such as combs or subtours.  This leads to our main result, a hierarchy of families of facet-defining inequalities and polynomial-time separation algorithms for them.  
\end{abstract}



%

\section{Introduction}

The {\em circuit constraint} \cite{Lau78,CasLab97,ShuBer94} requires that a sequence of vertices in
a directed graph define a hamiltonian circuit.  Given a directed graph $G$ on vertices $1, \ldots, n$, the constraint is written
\begin{equation}
\mbox{circuit$(x_1, \ldots, x_n)$} \label{eq:cir}
\end{equation}
where variable $x_i$ denote the vertex that follows vertex $i$ in the
sequence.  The constraint
requires that $x=(x_1, \ldots, x_n)$ describe a hamiltonian
circuit of $G$.  For brevity, we will say that an $x$ satisfying
(\ref{eq:cir}) is a {\em circuit}.

We define the {\em hamiltonian circuit polytope} to be the convex hull of the feasible
solutions of (\ref{eq:cir}) when $G$ is a complete graph.  Thus if the {\em domain} $D_i$ of variable $x_i$ is the
set of values $x_i$ can take, we suppose that each $D_i=\{1, \ldots, n\}$.  To our
knowledge, this polytope has not been studied.  Our objective is to establish its basic properties and provide tools for identifying classes of facets of the polytope.  We use these tools to describe several families of facets.  In particular, we identify a hierarchy of families of facets, along with polynomial-time separation algorithms.

A circuit should be distinguished from a permutation.  Although a circuit $x=(x_1, \ldots, x_n)$ is always a permutation of $(1, \ldots, n)$, a permutation is not necessarily a circuit.  For example, $(x_1,x_2,x_3,x_4)=(3,4,2,1)$ is a circuit that goes from 1 to 3 to 2 to 4, and back to 1.  However, the permutation $(x_1,x_2,x_3,x_4)=(3,4,1,2)$ is not a circuit because it contains two subtours (1 to 3 to 1, 2 to 4 to 2).  If the domain of each $x_i$ is $\{1, \ldots, n\}$, then $n!$
values of $x$ are permutations but only $(n-1)!$ of these are
circuits.  

The convex hull of permutations of $1, \ldots, n$ is the {\em permutohedron}, which has been studied for at least a century \cite{Sch1911}.  The permutohedron is well understood and quite different from the hamiltonian circuit polytope, although we will see that they have some facets in common.

The paper is organized as follows.  We begin by clarifying the connection between the circuit constraint and the traveling salesman problem, and how facets identified here can provide lower bounds for the problem.  We then introduce general variable domains and establish the dimension of the hamiltonian circuit polytope for an arbitrary domain.  Following this, we develop two tools for identifying facets of the polytope: (a) necessary and sufficient conditions for an inequality with at most $n-4$ variables to be facet-defining, stated in terms of {\em undominated} circuits; and (b) a simple greedy algorithm that generates all undominated circuits, along with a proof of its completeness.

We then apply these tools to analyze the structure of the hamiltonian circuit polytope.  A key element of the analysis is a novel approach to identifying families of facets.  Rather than associate facet-defining inequalities with graphical substructures such as combs and subtours, we associate them with the position of their variables in the sequence $x_1, \ldots, x_n$.  Different patterns of variable indices give rise to different classes of facets.  

We first describe a family of inequalities that are facet defining for both the permutohedron and the hamiltonian circuit polytope, and we provide an exhaustive list of two-term facets.  We then proceed to our main result, which is a hierarchy of facets of increasing combinatorial complexity.  We explicitly describe the facets on levels~0, 1 and~2 of the hierarchy and show how similar analysis can identify facets on higher levels.  We conclude by presenting polynomial-time separation algorithms for all families of facets identified here.  The algorithms yield a separating cut for each family whenever one exists.

\section{Sequencing Problems}

The circuit constraint is useful for formulating combinatorial
problems that involve permutations or sequencing.  One of the best
known such problems is the {\em traveling salesman problem} (TSP), which may be
very succinctly written
\begin{equation}
\begin{array}{l}
{\displaystyle \min \; \sum_{i=1}^n c_{ix_i}
} \vspace{1ex} \\
\mbox{circuit}(x_1, \ldots, x_n), \;\; x_i\in D_i, \; i=1, \ldots n
\vspace{.5ex}
\end{array} \label{eq:tsp}
\end{equation}
where $c_{ij}$ is the distance from city $i$ to city $j$.  The
objective is to visit each city once, and return to the starting
city, in such a way as to minimize the total travel distance.

The facet-defining inequalities we obtain for the hamiltonian circuit polytope can be used to obtain lower bounds on the optimal value of the TSP and related problems.  Bounds of this sort can be indispensable for solving the problem.  In addition, domain filtering methods developed elsewhere \cite{CasLab97,KayHoo06,ShuBer94} for the circuit constraint can be useful for eliminating infeasible values from the variable domains.  

Bounds are normally obtained for the TSP by formulating it with \mbox{0--1} variables $y_{ij}$, where $y_{ij}=1$ if vertex $j$
immediately follows vertex $i$ in the hamiltonian circuit.  The
problem (\ref{eq:tsp}) can then be written
\begin{equation}
\begin{array}{l}
{\displaystyle \min \;\sum_{ij} c_{ij}y_{ij}
} \vspace{1ex} \\
{\displaystyle \sum_j y_{ij} = \sum_j y_{ji} = 1, \;\; i=1, \ldots,
n
} \vspace{1ex} \\
{\displaystyle \sum_{\scriptsize
\begin{array}{@{}c@{}}
i\in V \\
j\not\in V
\end{array}
} \hspace{-.5ex} y_{ij} \geq 1, \;\; \mbox{all $V\subset \{1,
\ldots, n\}$ with $2\leq |V|\leq n-2$}
} \vspace{.5ex} \\
y_{ij} \in \{0,1\}, \;\;\mbox{all $i,j$} 
\end{array} \label{eq:tsp2}
\end{equation}
The polyhedral structure of problem (\ref{eq:tsp2}) has been
intensively analyzed, and surveys of this work may be found in
\cite{BalFis02,JunReiRin95,Nad02}.  Bounds are obtained by solving a linear programming problem that minimizes the objective function in (\ref{eq:tsp2}) subject to valid inequalities for this problem, including facet-defining inequalities.  

Although the objective function of model (\ref{eq:tsp}) is nonlinear, valid inequalities for (\ref{eq:tsp}) can be mapped into the \mbox{0--1} model (\ref{eq:tsp2}), where the \mbox{objective} function is linear.  This is accomplished by the simple change of variable $x_i=\sum_j jy_{ij}$, which transforms linear inequalities in the variables $x_i$ into linear inequalities in the \mbox{0--1} variables $y_{ij}$.  These can be combined with valid inequalities that have been developed for the \mbox{0--1} model, so as to obtain a lower bound on the objective function value.

This strategy is applied in \cite{BerHoo12,BerHoo13} to graph coloring problems.  Facet-defining inequalities for a formulation in terms of finite-domain variables $x_i$ are transformed into valid inequalities for the standard \mbox{0--1} model.  The resulting cuts are quite different from known classes of valid inequalities.  They yield tighter bounds in substantially less compututation time.  

We leave to future research the question of how the valid inequalities obtained here compare with known valid cuts when mapped into the \mbox{0--1} model.  Our focus is on the structure of the hamiltonian circuit polytope, which is an interesting object of study in its own right.

The {\em all-different constraint} \cite{Lau78,Reg94} provides a third formulation for the TSP, which may be written
\begin{equation}
\begin{array}{l}
{\displaystyle \min \; \sum_{i=1}^n c_{x_i x_{i+1}}
} \vspace{1ex} \\
\mbox{all-different}(x_1, \ldots, x_n), \;\;\; x_i\in\{1, \ldots, n\}, \; i=1,\ldots, n
\vspace{.5ex}
\end{array} \label{eq:tsp3}
\end{equation}
where $x_{n+1}$ is identified with $x_1$.  The all-different constraint simply requires that $x_1,\ldots, x_n$ be a permutation of $1, \ldots, n$, and the convex hull of its solutions is the permutohedron.  Although the facets of the permutohedron are well known (see Section~\ref{permutation}), they cannot be transformed into linear inequalities for the \mbox{0--1} model (\ref{eq:tsp2}) because the variables $x_i$ have a different meaning than in the circuit model (\ref{eq:tsp}).  In addition, missing edges in the graph $G$ cannot be represented by removing elements from the domains $D_i$ as in (\ref{eq:tsp}).

\section{General Domains}

A peculiar characteristic of the circuit constraint is that the
values of its variables are indices of other variables.  Because the
vertex immediately after $x_i$ is $x_{x_i}$, the value of $x_i$ must
index a variable.  The numbers $1, \ldots, n$ are normally used as
indices, but this is an arbitrary choice.  One could just as well
use any other set of distinct numbers, which would give rise to a
different polytope.  Thus the hamiltonian circuit polytope cannot be
fully understood unless it is characterized for general numerical
domains, and not just for $1, \ldots, n$.  

We therefore generalize the circuit constraint so that each domain
$D_i$ is drawn from an arbitrary set $\{v_1, \ldots, v_n\}$ of
nonnegative real numbers.  The constraint is written
\begin{equation}
\mbox{circuit}(x_{v_1}, \ldots, x_{v_n}) \label{eq:cir2}
\end{equation}
It is convenient to assume $v_1<\cdots< v_n$.  Thus
circuit$(x_0,x_{2.3},x_{3.1})$ is a well-formed circuit constraint
if the variable domains are subsets of $\{0,2.3,3.1\}$.  The
nonnegativity of the $v_i$s does not sacrifice generality when the domains are finite, since one
can always translate the origin so that the feasible points lie in
the nonnegative orthant.

Most of the results stated here are valid for a general finite domain.  However, to simplify notation we develop the facets in the hierarchy mentioned earlier only for $\{1, \ldots, n\}$.

To avoid an additional layer of subscripts, we will consistently
abuse notation by writing $x_{v_i}$ as $x_i$.  We therefore write
the constraint (\ref{eq:cir2}) as (\ref{eq:cir}), with the understanding that $x=(x_1, \ldots, x_n)$ satisfies (\ref{eq:cir}) if and
only if $\pi_1, \ldots, \pi_n$ is a permutation of $1, \ldots,
n$, where $\pi_1=1$ and $v_{\pi_i} = x_{\pi_{i-1}}$ for $i=2,
\ldots n$.

We define the hamiltionian circuit polytope $H_n(v)$ with respect to $v=(v_1,
\ldots, v_n)$ to be the convex hull of the feasible solutions of
(\ref{eq:cir}) for full domains; that is, each domain $D_i$ is
$\{v_1, \ldots, v_n\}$.  All of the facet-defining inequalities
we identify for full domains are valid inequalities for
smaller domains, even if they may not define facets of the convex
hull.

\section{Dimension of the Polytope}

We begin by establishing the dimension of the hamiltonian circuit polytope.

\begin{theorem}  \label{th:dimension}
The dimension of $H_n(v)$ is $n-2$ for $n=2,3$
and $n-1$ for $n \geq 4$.
\end{theorem}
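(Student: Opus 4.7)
The natural upper bound comes from the observation that every circuit $x$ is a permutation of $(v_1, \ldots, v_n)$, so the single linear equation $\sum_i x_i = \sum_i v_i$ is always satisfied; this forces $\dim H_n(v) \leq n - 1$. The two small cases are then dispatched by direct enumeration: when $n = 2$ there is a unique circuit $(v_2, v_1)$, so $H_2(v)$ is a point of dimension $0$; when $n = 3$ the two circuits $(v_2, v_3, v_1)$ and $(v_3, v_1, v_2)$ are distinct (since $v_1 < v_2 < v_3$), so $H_3(v)$ is a line segment of dimension $1$. Both match the claimed $n - 2$.

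For the substantive case $n \geq 4$, the plan is to exhibit $n$ affinely independent circuits. The natural candidates are the canonical circuit $C_0$ corresponding to $v_1 \to v_2 \to \cdots \to v_n \to v_1$, whose $x$-representation is $(v_2, v_3, \ldots, v_n, v_1)$, together with the $n - 1$ circuits $C_1, \ldots, C_{n-1}$ obtained from $C_0$ by transposing $v_k$ and $v_{k+1}$ in the cyclic sequence. A direct bookkeeping check shows that each $C_k$ is a valid cycle and that the difference $C_k - C_0$ has exactly three nonzero entries, located at the positions indexed by $v_{k-1}, v_k, v_{k+1}$ (with indices reduced modulo $n$), taking the values $v_{k+1} - v_k$, $v_{k+2} - v_{k+1}$, and $v_k - v_{k+2}$ respectively.

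The problem then reduces to linear independence of the $n - 1$ difference vectors $C_1 - C_0, \ldots, C_{n-1} - C_0$. Collecting them as rows of an $(n-1) \times n$ matrix and deleting the column indexed by $v_n$ (this loses no information, since each row sums to zero) yields an $(n-1) \times (n-1)$ matrix $D'$ of almost tridiagonal shape: the interior rows carry three consecutive nonzero entries, while rows $1$ and $n-1$ carry only two. After reparametrizing by the positive increments $d_i := v_{i+1} - v_i$, I would compute $\det D'$ by cofactor expansion along the last row, whose only two nonzero entries are $d_{n-1}$ and $-(d_1 + \cdots + d_{n-1})$; this produces a recursion expressing $\det D'$ in terms of two smaller minors of similar structure.

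The main obstacle is showing that this determinant is nonzero for every admissible $v$, not merely for generic $v$. I plan to establish this by induction on $n$, arguing that $\det D'$ is a polynomial in the positive quantities $d_1, \ldots, d_{n-1}$ in which every monomial carries the same sign, so that no cancellation can occur. Tracking signs through the recursion requires some care, because individual entries of $D'$ mix positive and negative terms (the entries $-(d_j + d_{j+1})$ have opposite sign to the adjacent diagonal entries); the delicate point is to verify that the two cofactors produced by the last-row expansion carry opposite signs, so that after multiplication by the negative coefficients $-d_{n-1}$ and $-(d_1 + \cdots + d_{n-1})$ the two contributions combine constructively rather than cancel. Once $\det D' \neq 0$ is established, the $C_k$ are affinely independent and $\dim H_n(v) = n - 1$ follows.
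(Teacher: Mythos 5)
Your upper bound via $\sum_i x_i=\sum_i v_i$ and your treatment of $n=2,3$ are fine, and your circuits are legitimate candidates: the difference vectors $C_k-C_0$ are exactly as you describe, and deleting the column indexed by $v_n$ is harmless. The gap is in the determinant argument. Expanding $\det D'$ along the last row, the two contributions do \emph{not} combine constructively; they have opposite signs, so there is genuine cancellation that must be controlled. Concretely, for $n=4$ (writing $d_i=v_{i+1}-v_i>0$) the two minors are $M_{32}=-d_2(d_2+d_3)<0$ and $M_{33}=d_2(d_1+d_2+d_3)>0$, and the expansion gives
\[
\det D' \;=\; -d_3M_{32}-(d_1+d_2+d_3)M_{33}\;=\;d_2\,d_3(d_2+d_3)\;-\;d_2(d_1+d_2+d_3)^2,
\]
a positive term plus a negative term. (The result is negative because $(d_1+d_2+d_3)^2>d_3(d_2+d_3)$, but that domination is exactly what your sketch does not prove.) The same happens at every stage of your recursion: opposite-signed minors times two negative multipliers yield opposite-signed products. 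Your ``every monomial has the same sign'' claim does appear to hold in small cases --- for $n=5$ all monomial coefficients of $\det D'$ are $\le 0$, though several cancel exactly to zero (e.g.\ the coefficient of $d_2d_3d_4^2$) --- but it does not follow from the sign-tracking you describe; you would need a strictly stronger inductive hypothesis, such as explicit comparative bounds on the two minors.

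For contrast, the paper avoids any determinant identity by choosing a different family of circuits: $v_1$ followed by the cyclic shifts of $v_2,\dots,v_n$, plus the single adjacent swap of $v_{n-1},v_n$. After subtracting a fixed row, one row operation and one row interchange produce a lower triangular matrix whose diagonal entries are visibly nonzero (each is $v_1-v_j<0$ or a quantity $E_n<0$), so nonsingularity is immediate. If you want to keep your transposition circuits, the more promising route is to look for row combinations that triangularize $D'$ rather than to fight the tridiagonal determinant; as it stands, the nonvanishing of $\det D'$ for all admissible $v$ is asserted but not established.
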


\begin{proof}
The polytope $H_n(v)$ is a point $(v_2,v_1)$ for $n=2$
and the line segment from $(v_2,v_3,v_1)$ to $(v_3,v_1,v_2)$ for
$n=3$.  In either case the dimension is $n-2$.

To prove the theorem for $n\geq 4$, note first that all feasible
points for (\ref{eq:cir}) satisfy
\begin{equation}
\sum_{i=1}^n x_i = \sum_{i=1}^n v_i \label{eq:affine}
\end{equation}
(Recall that $x_i$ is shorthand for $x_{v_i}$.)  Thus, $H_n(v)$ has
dimension at most $n-1$.  To show it has dimension exactly $n-1$, it
suffices to exhibit $n$ affinely independent points in $H_n(v)$.
Consider the following $n$ permutations of $v_1, \ldots, v_n$,
where the first $n-1$ permutations consist of $v_1$ followed by
cyclic permutations of $v_2, \ldots, v_n$. The last permutation
is obtained by swapping $v_{n-1}$ and $v_n$ in the first
permutation:
\begin{equation}
\begin{array}{l@{\hspace{1ex}}l@{\hspace{1ex}}l@{\hspace{1ex}}c@{\hspace{1ex}}l@{\hspace{1ex}}l@{\hspace{1ex}}l}
v_1, & v_2,     & v_3,     & \ldots, & v_{n-2}, & v_{n-1}, & v_n \\
v_1, & v_3,     & v_4,     & \ldots, & v_{n-1}, & v_n,     & v_2     \\
v_1, & v_4,     & v_5,     & \ldots, & v_n,     & v_2,     & v_3 \\
 & & & \vdots & & & \\
v_1, & v_{n-1}, & v_n,     & \ldots, & v_{n-4}, & v_{n-3}, & v_{n-2} \\
v_1, & v_n,     & v_2,     & \ldots, & v_{n-3}, & v_{n-2}, & v_{n-1} \\
v_1, & v_2,     & v_3,     & \ldots, & v_{n-2}, & v_n,     & v_{n-1}
\end{array} \label{eq:perm0}
\end{equation}
The rows of the following matrix correspond to circuit
representations of the above permutations.  Thus row $i$ contains
the values $x_1, \ldots, x_n$ for the $i$th permutation in
(\ref{eq:perm0}).
\begin{equation}
\left[
\begin{array}{ccccccc}v_2 & v_3 & v_4 & \cdots & v_{n-1} & v_n     & v_1 \\
                      v_3 & v_1 & v_4 & \cdots & v_{n-1} & v_n     & v_2 \\
                      v_4 & v_3 & v_1 & \cdots & v_{n-1} & v_n     & v_2 \\
                   \vdots & \vdots
                                & \vdots
                                      &        & \vdots  & \vdots  & \vdots \\
                  v_{n-1} & v_3 & v_4 & \cdots & v_1     & v_n     & v_2 \\
                  v_n     & v_3 & v_4 & \cdots & v_{n-1} & v_1     & v_2 \\
                  v_2     & v_3 & v_4 & \cdots & v_n     & v_1     & v_{n-1}
\end{array}
\right]  \label{eq:00}
\end{equation}
Since each row of (\ref{eq:00}) is a point in $H_n(v)$, it suffices
to show that the rows are affinely independent.  Subtract $[v_n
\;\; v_3 \;\; v_4 \; \cdots \; v_{n-1} \;\; v_n \;\; v_2]$ from
every row of (\ref{eq:00}) to obtain
\begin{equation}
{\small \left[
\begin{array}{c@{\hspace{1ex}}c@{\hspace{1ex}}ccccc}
v_2-v_n     & 0       & 0       & \cdots & 0               & 0           & v_1-v_2 \\
v_3-v_n     & v_1-v_3 & 0       & \cdots & 0               & 0           & 0 \\
v_4-v_n     & 0       & v_1-v_4 & \cdots & 0               & 0           & 0 \\
\vdots      & \vdots  & \vdots  &        & \vdots          & \vdots      & \vdots \\
v_{n-1}-v_n & 0       & 0       & \cdots & v_1-v_{n-1}     & 0           & 0 \\
0           & 0       & 0       & \cdots & 0               & v_1-v_n     & 0 \\
v_2-v_n     & 0       & 0       & \cdots & v_n-v_{n-1}     & v_1-v_n     & v_{n-1}-v_2
\end{array}
\right] } \label{eq:01}
\end{equation}
The rows of (\ref{eq:00}) are affinely independent if and only if
the rows of (\ref{eq:01}) are.  It now suffices to show that
(\ref{eq:01}) is nonsingular, and we do so through a series of row
operations.  The first step is to subtract $(v_{n-1}-v_2)/(v_1-v_2)$
times row 1, $(v_n-v_{n-1})/(v_1-v_{n-1})$ times row $n-2$, and
row $n-1$ from row $n$ to obtain
\begin{equation}
{\small \left[
\begin{array}{ccccccc}
v_2-v_n     & 0       & 0       & \cdots & 0           & 0           & v_1-v_2 \\
v_3-v_n     & v_1-v_3 & 0       & \cdots & 0           & 0           & 0 \\
v_4-v_n     & 0       & v_1-v_4 & \cdots & 0           & 0           & 0 \\
\vdots      & \vdots  & \vdots  &        & \vdots      & \vdots      & \vdots  \\
v_{n-1}-v_n & 0       & 0       & \cdots & v_1-v_{n-1} & 0           & 0 \\
0           & 0       & 0       & \cdots & 0           & v_1-v_n     & 0 \\
E_n         & 0       & 0       & \cdots & 0           & 0 & 0
\end{array}
\right] } \label{eq:02}
\end{equation}
where
\[
E_n  = -\frac{v_n-v_{n-1}}{v_{n-1}-v_1}(v_n-v_{n-1})
-\frac{v_{n-1}-v_1}{v_2-v_1}(v_n-v_2)
\]
Interchange the first and last rows of (\ref{eq:02}) to obtain
\begin{equation}
{\small \left[
\begin{array}{ccccccc}
E_n         & 0       & 0       & \cdots & 0           & 0           & 0 \\
v_1-v_n     & v_1-v_3 & 0       & \cdots & 0           & 0           & 0 \\
v_4-v_n     & 0       & v_1-v_4 & \cdots & 0           & 0           & 0 \\
\vdots      & \vdots  & \vdots  &        & \vdots      & \vdots      & \vdots \\
v_{n-1}-v_n & 0       & 0       & \cdots & v_1-v_{n-1} & 0           & 0 \\
0           & 0       & 0       & \cdots & 0           & v_1-v_n     & 0 \\
v_2-v_n     & 0       & 0       & \cdots & 0           & 0           & v_1-v_2
\end{array}
\right] } \label{eq:03}
\end{equation}
Note that $E_n<0$ since $v_1<\cdots<v_n$.  Thus (\ref{eq:03}) is
a lower triangular matrix with nonzero diagonal elements and is
therefore nonsingular. 
\end{proof}

\section{Facet-Defining Inequalities}

We now develop necessary and sufficient conditions for an inequality containing at most $n-4$ variables to be facet defining for the hamiltonian circuit polytope.  The
following lemma is key.

\begin{lemma} \label{le:zeroes}
Suppose that the inequality
\begin{equation}
\sum_{j\in J} a_jx_j \geq \alpha \label{eqineq}
\end{equation}
is valid for circuit$(x_1, \ldots, x_n)$ and is satisfied as an
equation by at least one circuit $x$.  If $|J| \leq n-4$ and
\begin{equation}
\sum_{j=1}^n d_jx_j = \delta \label{eqeq}
\end{equation}
is satisfied by all circuits $x$ that satisfy (\ref{eqineq}) as an
equation, then $d_i=d_j$ for all $i,j\not\in J$.
\end{lemma}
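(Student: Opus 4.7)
The plan is to reduce the claim to a dimension argument applied to the subset of circuits that coincide with the hypothesized tight circuit $x^*$ on every $J$-coordinate. Set $V' := \{1,\ldots,n\}\setminus J$, so that $|V'|\geq 4$ by hypothesis, and let $C_{x^*}$ denote the set of circuits $x$ satisfying $x_j = x^*_j$ for all $j\in J$. Since the left-hand side of (\ref{eqineq}) depends only on the $J$-coordinates, every $x\in C_{x^*}$ satisfies (\ref{eqineq}) as an equation and, by hypothesis, also satisfies (\ref{eqeq}). It will therefore suffice to show that the projection of $C_{x^*}$ onto the $V'$-coordinates has affine dimension exactly $|V'|-1$: the only equation that could then hold on this projection is a scalar multiple of $\sum_{v\in V'} x_v = \text{const}$ (obtained from (\ref{eq:affine}) by subtracting off the fixed $J$-values), forcing the $d_v$ for $v\in V'$ to be all equal.

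I would first parameterize $C_{x^*}$ by cyclic permutations of $V'$. In $x^*$, the fixed edges $j\to x^*_j$ ($j\in J$) form disjoint chains, each bracketed between two vertices of $V'$, and these chains are preserved in every $x\in C_{x^*}$. Defining $\phi(v)$ to be the next $V'$-vertex visited after $v$ yields a cyclic permutation of $V'$, and $x\mapsto\phi$ is a bijection between $C_{x^*}$ and the cyclic permutations of $V'$. The corresponding $V'$-coordinates satisfy $x_v = f(\phi(v))$, where $f:V'\to W:=\{1,\ldots,n\}\setminus\{x^*_j : j\in J\}$ maps each $v'\in V'$ either to $v'$ itself (when its predecessor in $x^*$ lies in $V'$) or to the head of the $J$-chain terminating at $v'$; since $|V'|=|W|$ and $f$ is injective by construction, $f$ is a bijection.

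To exhibit $|V'|$ affinely independent points in the $V'$-projection of $C_{x^*}$, I would adapt the construction in the proof of Theorem~\ref{th:dimension}. Relabel the elements of $V'$ as $u_1,\ldots,u_k$ with $k:=|V'|$, chosen so that $f(u_1)<f(u_2)<\cdots<f(u_k)$ (possible since $f(V')=W\subset\mathbb{R}$). Form $k$ cyclic permutations of $V'$ analogous to the $n$ rows of (\ref{eq:perm0}), with $u_1$ playing the role of $v_1$, the remaining $u_j$ arranged in cyclic-shift order, and a final permutation obtained by swapping $u_{k-1}$ and $u_k$. Applying $f$ coordinate-wise gives $k$ points in the $V'$-projection of $C_{x^*}$, and the matrix of their differences from a common reference row has precisely the structure of (\ref{eq:01}) with every $v_j$ replaced by $f(u_j)$.

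Because $f(u_1)<\cdots<f(u_k)$ by construction, the same sequence of row operations described after (\ref{eq:01}) reduces this matrix to lower-triangular form with nonzero diagonal; the analog of the term $E_n$ is strictly negative, using $k\geq 4$. This confirms that the $V'$-projection of $C_{x^*}$ has dimension $k-1$, from which $d_i=d_j$ for all $i,j\not\in J$ follows as outlined. The main obstacle will be verifying carefully that the row-reduction of Theorem~\ref{th:dimension} goes through unchanged in this slightly more general setting where the entries come from $W$ via $f$ rather than from $V'$; this is routine because the reduction uses only the strict ordering of the values involved, a property that survives the relabeling.
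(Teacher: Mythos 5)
Your proof is correct, but it takes a genuinely different route from the paper's. The paper argues locally: it picks any four indices $j_1,\ldots,j_4\notin J$, views a tight circuit $x^0$ as a concatenation of four blocks ending at $v_{j_1},\ldots,v_{j_4}$, and permutes the last three blocks to produce five more circuits that are still tight for (\ref{eqineq}) because only the coordinates $x_{j_1},\ldots,x_{j_4}$ change; substituting all six circuits into (\ref{eqeq}) yields a small triangular linear system forcing $d_{j_1}=d_{j_2}=d_{j_3}=d_{j_4}$. You instead argue globally: freezing the $J$-coordinates at their values in one tight circuit, contracting the forced chains, and observing that the free coordinates range over exactly the vertex set of a smaller polytope $H_k(v')$ with $k=n-|J|\geq 4$, whose dimension is $k-1$ by the argument of Theorem~\ref{th:dimension}; hence the only equation in those coordinates is the analogue of (\ref{eq:affine}). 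Your reduction is more conceptual, reuses Theorem~\ref{th:dimension} rather than redoing a rank computation, and makes the role of the hypothesis $|J|\leq n-4$ transparent (one needs $k\geq 4$ for the contracted polytope to be full-dimensional); the cost is the bookkeeping needed to establish the bijection with cyclic permutations of $V'$ and the bijection $f$, which you handle correctly. One small imprecision: the forced chains are not ``bracketed between two vertices of $V'$''---the first vertex of each maximal chain is indexed by an element of $J$ (it carries a fixed outgoing edge), while only the terminal vertex is indexed by $V'$; this does not affect your definitions of $\phi$ and $f$, which are stated correctly, nor the injectivity of $f$ (chain starts have indices in $J$, isolated vertices have indices in $V'$, so the two cases cannot collide).
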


\begin{proof}
Because $|J| \leq n-4$, it suffices to prove that
$d_{j_1}=d_{j_2}=d_{j_3}=d_{j_4}$ for any four distinct indices
$j_1, \ldots, j_4 \not\in J$.

Let $x^0$ be any circuit that satisfies (\ref{eqineq}) as an
equation, and let the permutation described by $x^0$ be
\vspace{-1ex}
\[
\begin{array}{l}
\hspace{-1.2ex} v_1,\ldots, v_{j_1-1},v_{j_1},v_{j_1+1},\ldots,v_{j_2-1},v_{j_2},
v_{j_2+1},\ldots,v_{j_3-1},v_{j_3},v_{j_3+1}, \ldots, v_{j_4-1},
v_{j_4}
\end{array}
\label{perm0}
\]
Consider the circuits $x^1, \ldots, x^5$ that describe the following
permutations, respectively:
\[
\begin{array}{l}
\hspace{-1.2ex} v_1,\ldots,v_{j_1-1},v_{j_1},v_{j_3+1},\ldots,v_{j_4-1},v_{j_4},v_{j_1+1},\ldots,v_{j_2-1},v_{j_2},v_{j_2+1},\ldots,v_{j_3-1},v_{j_3} \vspace{.5ex} \\
\hspace{-1.2ex} v_1,\ldots,v_{j_1-1}, v_{j_1},v_{j_2+1},\ldots,v_{j_3-1},v_{j_3},v_{j_3+1},\ldots,v_{j_4-1},v_{j_4},v_{j_1+1},\ldots,v_{j_2-1},v_{j_2} \vspace{.5ex} \\
\hspace{-1.2ex} v_1,\ldots, v_{j_1-1},v_{j_1},v_{j_2+1},\ldots,v_{j_3-1},v_{j_3},v_{j_1+1},\ldots,v_{j_2-1},v_{j_2},v_{j_3+1},\ldots,v_{j_4-1},v_{j_4} \vspace{.5ex} \\
\hspace{-1.2ex} v_1,\ldots, v_{j_1-1},v_{j_1},v_{j_1+1},\ldots,v_{j_2-1},v_{j_2},v_{j_3+1},\ldots,v_{j_4-1},v_{j_4}, v_{j_2+1},\ldots,v_{j_3-1},v_{j_3} \vspace{.5ex} \\
\hspace{-1.2ex} v_1,\ldots,v_{j_1-1},v_{j_1},v_{j_3+1},\ldots,v_{j_4-1},v_{j_4},v_{j_2+1},\ldots,v_{j_3-1},v_{j_3},v_{j_1+1},\ldots,v_{j_2-1},v_{j_2}
\vspace{.5ex}
\end{array}
\]
We obtain $x^1, \ldots, x^5$ from $x^0$ by viewing the permutation
represented by $x^0$ as a concatenation of four subsequences,
each ending in one of the values $v_{j_i}$.  We fix the first
subsequence and obtain $x^1$ and $x^2$ by cyclically permuting the
remaining three subsequences.  We obtain $x^3$, $x^4$ and $x^5$ by
interchanging a pair of subsequences.

Note that variables $x_{j_1}, \ldots, x_{j_4}$ have the values shown
below in each circuit $x^i$:
\[
\begin{array}{ccccl}
x_{j_1}     & x_{j_2}     & x_{j_3}     & x_{j_4} &\vspace{.5ex} \\
\cline{1-4}
v_{j_1+1}   & v_{j_2+1}   & v_{j_3+1}   & v_{1}       & (x^0) \\
v_{j_3+1}   & v_{j_2+1}   & v_{1}       & v_{j_1+1}   & (x^1) \\
v_{j_2+1}   & v_{1}       & v_{j_3+1}   & v_{j_1+1}   & (x^2) \\
v_{j_2+1}   & v_{j_3+1}   & v_{j_1+1}   & v_{1}       & (x^3) \\
v_{j_1+1}   & v_{j_3+1}   & v_{1}       & v_{j_2+1}   & (x^4) \\
v_{j_3+1}   & v_{1}       & v_{j_1+1}   & v_{j_2+1}   & (x^5)
\end{array}
\]
and all other variables $x_j$ have value $x^0_j$ in each circuit
$x^i$. Thus all six circuits $x^0, \ldots,x^5$ satisfy
(\ref{eqineq}) at equality, so that $dx^i=\delta$ for
$i=0,\ldots,5$.  This implies
\[
{\textstyle \frac{1}{2}}
 \left[
      \begin{array}{c}
      (dx^0+dx^1+dx^5)-(dx^2+dx^3+dx^4) \vspace{.5ex} \\
      (dx^0+dx^2+dx^5)-(dx^1+dx^3+dx^4) \vspace{.5ex} \\
      (dx^0+dx^3+dx^5)-(dx^1+dx^2+dx^4) \vspace{.5ex} 
      \end{array}
\right]
\begin{array}{c}
=
\left[
      \begin{array}{@{}c@{}}
      0 \\ 0 \\ 0 
      \end{array}
\right] \\
\ \vspace{-1.7ex}
\end{array}
\]
Substituting the values of $x^0, \ldots, x^5$, we obtain
\[
\begin{array}{@{}c@{}}
\left[
      \begin{array}{cccc}
      v_{j_3+1}-v_{j_2+1} & v_{j_2+1}-v_{j_3+1} & 0                    & 0 \\
      0                   & v_{1}-v_{j_3+1}     & v_{j_3+1}- v_{1}     & 0 \\
      0                   & 0                   & v_{j_1+1}- v_{1}     & v_{1}-v_{j_1+1}  
      \end{array}
\right] 
\\
\ \vspace{-.5ex} 
\end{array}
\left[
      \begin{array}{@{}c@{}}
      d_{j_1} \\ d_{j_2} \\ d_{j_3} \\ d_{j_4}
      \end{array}
\right]  
\begin{array}{@{}c@{}}
=
\left[
      \begin{array}{@{}c@{}}
      0 \\ 0 \\ 0 
      \end{array}
\right] \\
\ \vspace{-.5ex}
\end{array}
\]
from which we can conclude that  $d_{j_1}=d_{j_2}=d_{j_3}=d_{j_4}$.
$\Box$
\end{proof}

Lemma~\ref{le:zeroes} applies only when $|J| \leq n-4$ because its proof relies on the absence of at least four variables from (\ref{eqineq}).  The theorems  below are therefore stated only for $|J| \leq n-4$.  We conjecture that they also hold for the densest facets ($|J|>n-4$), but proof seems to require the analysis of several special cases that substantially complicate the argument.  This slightly stronger result would be of little additional value for identifying useful families of facets.

For a given $x$, we denote by $x(J)$ the tuple
$(x_{j_1}, \ldots, x_{j_m})$ when $J=\{j_1, \ldots, j_m\}$.  We say
that $x(J)$ is a {\em \mbox{$J$-circuit}} if it creates no cycles and is
therefore a partial solution of the circuit constraint.  That is,
$x(J)$ is a \mbox{$J$-circuit} if there is no subsequence $j_{i_1}, \ldots,
j_{i_k}$ of the indices in $J$ such that $x_{j_{i_t}}=v_{j_{i_{t+1}}}$
for $t=1, \ldots, k-1$ and $x_{j_{i_k}}=v_{j_{i_1}}$.  The following
lemma is straightforward, but its proof introduces notation we will
need later.

\begin{lemma} \label{le:projection}
If $\bar{x}(J)$ is a \mbox{$J$-circuit}, then there is a circuit $x$ such
that \mbox{$x(J)=\bar{x}(J)$}.
\end{lemma}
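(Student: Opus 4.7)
The plan is to view $\bar{x}(J)$ as specifying an arc set $A = \{(v_j, \bar{x}_j) : j \in J\}$ in the complete digraph on $\{v_1,\ldots,v_n\}$ and to extend $A$ to the arc set of a hamiltonian circuit on those vertices. Since $\bar{x}(J)$ is a $J$-circuit, $A$ contains no directed cycle.

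First I would verify that $A$ decomposes into vertex-disjoint simple directed paths $P_1,\ldots,P_k$. Every $v_j$ with $j \in J$ has out-degree exactly one in $A$ and every other vertex has out-degree zero; the in-degree at any vertex equals the number of $j \in J$ for which $\bar{x}_j$ takes that value, which is at most one because the coordinates of a partial circuit are distinct. Together with the no-cycle hypothesis, these degree bounds force the underlying subgraph of $A$ to be a disjoint union of simple directed paths; write $s_\ell$ and $t_\ell$ for the start and end of $P_\ell$.

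Next I would splice these paths with the set $U$ of vertices touched by no arc of $A$ to form a single hamiltonian circuit. Listing the elements of $U$ in arbitrary order $u_1,\ldots,u_r$, append the arcs $(t_\ell, s_{\ell+1})$ for $\ell = 1,\ldots,k-1$, then $(t_k, u_1)$, then $(u_i, u_{i+1})$ for $i = 1,\ldots,r-1$, and finally $(u_r, s_1)$, with the obvious collapse when $U$ is empty or $k=1$. The resulting closed sequence visits every vertex exactly once because the $P_\ell$ are vertex-disjoint and $U$ meets none of them, and every added arc is available because the digraph is complete. Defining $x_i$ as the successor of $v_i$ in this closed sequence gives a circuit $x$ with $x(J) = \bar{x}(J)$.

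There is no real obstacle here; the only care needed is the bookkeeping for degenerate cases ($k=1$, $U = \emptyset$, or a path reduced to a single arc), for which the splicing formulas above remain well defined. The author's remark that this proof \textquotedblleft introduces notation we will need later\textquotedblright{} suggests that the explicit labels $s_\ell, t_\ell, u_i$ for the path endpoints and free vertices are the reusable content, not any subtle combinatorial argument.
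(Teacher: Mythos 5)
Your proposal is correct and follows essentially the same route as the paper: decompose the arcs $(v_j,\bar{x}_j)$ into vertex-disjoint maximal directed paths, splice them together with the untouched vertices, and read off the circuit. The only (harmless) difference is that you justify the path decomposition explicitly via degree counting, whereas the paper simply lists the maximal subchains and notes they are disjoint.
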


\begin{proof}
Let $J=\{j_1, \ldots, j_m\}$, and let $\{v_{i_1},
\ldots, v_{i_r}\}$ be the subset of domain values $v_1, \ldots,
v_n$ that occur in neither $\{v_{j_1}, \ldots, v_{j_m}\}$ nor
$\{ \bar{x}_{j_1},\ldots,\bar{x}_{j_m}\}$.  Consider the directed
graph $G_{\bar{x}(J)}$ that contains a vertex $v_i$ for each
$i\in\{1, \ldots, \mbox{$n$}\}$, a directed edge
$(v_{j_k},\bar{x}_{j_k})$ for $k=1, \ldots, m$, and a directed edge
$(v_{i_k},v_{i_{k+1}})$ for each $k=1, \ldots, r-1$.  The maximal
subchains of $G_{\bar{x}(J)}$ have the form
\[
\begin{array}{l}
v_{j_{k_1}} \rightarrow \cdots \rightarrow v_{j_{k'_1}} \rightarrow \bar{x}_{j_{k'_1}} \vspace{.5ex} \\
v_{j_{k_2}} \rightarrow \cdots \rightarrow v_{j_{k'_2}} \rightarrow \bar{x}_{j_{k'_2}} \\
\vdots \\
v_{j_{k_p}} \rightarrow \cdots \rightarrow v_{j_{k'_p}} \rightarrow \bar{x}_{j_{k'_p}} \vspace{.5ex} \\
v_{i_1} \rightarrow \cdots \rightarrow v_{i_r}
\end{array}
\]
Because
maximal subchains are disjoint, we can form a hamiltonian circuit in
$G_{\bar{x}(J)}$ by linking the last element of each subchain to the
first element of the next, and linking $v_{i_r}$ to $v_{k_1}$.  Let
$v_{s_1}, \ldots, v_{s_n}$ be the resulting circuit.  Then if
$x$ is given by $x_i = v_{s_{((i-1)\,\mbox{\tiny mod} \, n) + 1}}$ for
$i=1, \ldots, n$, then $x$ is a circuit and $x(J)=\bar{x}(J)$.
$\Box$
\end{proof}

The concept of {\em domination} between $J$-circuits is central to identifying facets of $H_n(v)$, because inequality (\ref{eqineq}) is valid if and only if it is satisfied by all undominated \mbox{$J$-circuits}.  If $(J_+,J_-)$ is a partition of $J$, we say that $x(J)$ dominates $y(J)$ with respect to $(J_+,J_-)$ when $x_j\leq y_j$ for all $j\in J_+$ and $x_j\geq y_j$ for all $j\in J_-$.  A
\mbox{$J$-circuit} $x(J)$ is {\em undominated} with respect to $(J_+,J_-)$ if no other \mbox{$J$-circuit} dominates it with respect to $(J_+,J_-)$.

\begin{lemma} \label{le:valid}
Inequality (\ref{eqineq}) is valid for the hamiltonian circuit polytope if and only if it is satisfied by all undominated \mbox{$J$-circuits} with respect to $(J_+,J_-)$, where $J_+=\{j\;|\;a_j>0\}$ and $J_-=\{j\;|\;a_j<0\}$.  
\end{lemma}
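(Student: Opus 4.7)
I would prove the two directions separately; the forward direction is immediate, and the converse is the substantive one. For necessity, suppose (\ref{eqineq}) is valid on $H_n(v)$. Given any undominated $J$-circuit $\bar{x}(J)$, Lemma~\ref{le:projection} produces a circuit $x$ with $x(J)=\bar{x}(J)$; since (\ref{eqineq}) depends only on coordinates indexed by $J$ and $x$ satisfies it, so does $\bar{x}(J)$. Note that this direction does not even use undominatedness---every $J$-circuit inherits (\ref{eqineq}) from a circuit extension.

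For the converse, the plan rests on a monotonicity principle: if $x(J)$ dominates $y(J)$ with respect to $(J_+,J_-)$, then $\sum_{j\in J} a_j x_j \leq \sum_{j\in J} a_j y_j$. This is verified termwise: for $j\in J_+$ we pair $a_j>0$ with $x_j\leq y_j$, and for $j\in J_-$ we pair $a_j<0$ with $x_j\geq y_j$, and coordinates $j\in J$ with $a_j=0$ (if any) contribute nothing. So dominators produce smaller left-hand sides, meaning the worst cases for the inequality $\sum a_j x_j\geq\alpha$ are precisely the undominated $J$-circuits.

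Now take an arbitrary circuit $y$. Its restriction $y(J)$ is a $J$-circuit, and the domination relation is a partial order (reflexive, transitive, antisymmetric on coordinates in $J_+\cup J_-$) on the finite set of $J$-circuits. Hence starting from $y(J)$ and moving upward in the domination order terminates at some undominated $J$-circuit $\bar{x}(J)$ which, by transitivity, still dominates $y(J)$. By hypothesis $\sum_{j\in J} a_j \bar{x}_j \geq \alpha$, and monotonicity gives $\sum_{j\in J} a_j y_j \geq \sum_{j\in J} a_j \bar{x}_j \geq \alpha$. Thus every circuit satisfies (\ref{eqineq}), so the inequality is valid on $H_n(v)$.

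No step looks genuinely hard; the only place requiring care is the existence of an undominated dominator, which reduces to the finite-poset observation above, and the correct orientation of the monotonicity (making sure the signs of $a_j$ and the inequality direction $\geq\alpha$ are compatible with calling the undominated $J$-circuits the extremal cases).
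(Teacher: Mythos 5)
Your proposal is correct and follows essentially the same route as the paper: validity implies satisfaction by all (undominated) $J$-circuits via Lemma~\ref{le:projection}, and conversely every circuit's restriction is dominated by some undominated $J$-circuit whose objective value is no larger, which forces the inequality to hold. The only difference is that you spell out the existence of an undominated dominator via the finite-poset argument, which the paper simply asserts.
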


\begin{proof}
A valid inequality must be satisfied by all circuits.  This means, due to Lemma~\ref{le:projection}, that it must be satisfied by all \mbox{$J$-circuits} and therefore by all undominated \mbox{$J$-circuits}.  For the converse, suppose (\ref{eqineq}) is satisfied by all undominated \mbox{$J$-circuits}, and let $x$ be any circuit.  Then $x(J)$ is dominated by some undominated \mbox{$J$-circuit} $x'(J)$ with respect to $(J_+,J_-)$, which means that $a_j(x_j - x'_j) \geq 0$ for all $j\in J$.
Thus we have
\[
\sum_{j \in J} a_jx_{j} \geq \sum_{j \in J} a_j x'_{j} \geq \alpha
\]
because $x'(J)$ satisfies (\ref{eqineq}), and so $x$ satisfies
(\ref{eqineq}).  This shows (\ref{eqineq}) is valid. 
$\Box$
\end{proof}

The following theorem provides sufficient conditions under which an inequality is facet defining.

\begin{theorem}  \label{th:main}
Consider any inequality of the form (\ref{eqineq}).  Let $S$ be the set of \mbox{$J$-circuits} that are undominated with respect
to $(J_+,J_-)$, where $J_+=\{j\;|\;a_j>0\}$, $J_-=\{j\;|\;a_j<0\}$, and $1\leq |J|\leq n-4$.  If all \mbox{$J$-circuits} in $S$ satisfy (\ref{eqineq}) and at least $|J|$ affinely independent \mbox{$J$-circuits} satisfy
\begin{equation}
\sum_{j \in J} a_j x_{j} = \alpha
\label{+facet_eq}
\end{equation}
then (\ref{eqineq}) defines a facet of $H_n(v)$.
\end{theorem}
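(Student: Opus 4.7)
The plan is to verify the two things needed for (\ref{eqineq}) to be facet-defining: that it is valid, and that the face it cuts out of $H_n(v)$ has dimension exactly $n-2$. Validity is immediate from Lemma~\ref{le:valid}, since $S$ is the set of all undominated $J$-circuits with respect to $(J_+,J_-)$ and every element of $S$ satisfies (\ref{eqineq}) by hypothesis.

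For the dimension, Theorem~\ref{th:dimension} gives $\dim H_n(v) = n-1$ with $H_n(v)$ lying in the hyperplane (\ref{eq:affine}), so it suffices to show that any equation $\sum_{j=1}^n d_j x_j = \delta$ satisfied by every circuit on which (\ref{eqineq}) is tight must be a linear combination of (\ref{eq:affine}) and (\ref{eqineq}). Fix such an equation. Since $|J|\geq 1$, the hypothesis yields at least one $J$-circuit satisfying (\ref{+facet_eq}); Lemma~\ref{le:projection} lifts it to a full circuit at which (\ref{eqineq}) holds with equality, so Lemma~\ref{le:zeroes} applies (using $|J|\leq n-4$) and produces a constant $c$ with $d_i = c$ for every $i\notin J$.

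Next I use (\ref{eq:affine}) to substitute $\sum_{i\notin J} x_i = \sum_{i=1}^n v_i - \sum_{j\in J} x_j$ into $\sum_j d_j x_j = \delta$, which reduces it to the equation $\sum_{j\in J}(d_j - c)x_j = \delta - c\sum_i v_i$, necessarily satisfied by every circuit that makes (\ref{eqineq}) tight. Applying Lemma~\ref{le:projection} to each of the $|J|$ affinely independent $J$-circuits promised by the hypothesis produces $|J|$ circuits satisfying (\ref{eqineq}) at equality, whose $J$-projections are the original $|J|$ affinely independent points in $\mathbb{R}^{|J|}$. These points therefore simultaneously satisfy the two linear equations $\sum_{j\in J} a_j x_j = \alpha$ and $\sum_{j\in J}(d_j-c)x_j = \delta - c\sum_i v_i$, and they span an affine hyperplane in $\mathbb{R}^{|J|}$, so the two equations define the same hyperplane. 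Because $J = J_+ \cup J_-$ forces $a_j \ne 0$ for all $j\in J$, there must be $\lambda\in\mathbb{R}$ with $d_j - c = \lambda a_j$ for every $j\in J$ and $\delta - c\sum_i v_i = \lambda\alpha$. Combined with $d_i = c$ for $i\notin J$, this writes $\sum_j d_j x_j = \delta$ as $c$ times (\ref{eq:affine}) plus $\lambda$ times (\ref{eqineq}), so (\ref{eqineq}) is facet defining.

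I do not expect any deep technical obstacle; the argument is essentially dimension counting against the backdrop of Lemma~\ref{le:zeroes}. The only part that needs genuine care is the bookkeeping between three levels of objects — full circuits, $J$-circuits, and linear equations — and specifically the step that pushes the $|J|$ affinely independent $J$-circuits through Lemma~\ref{le:projection} to obtain enough tight circuits to force the reduced equation to be parallel to the facet equation. Everything else should be routine linear algebra.
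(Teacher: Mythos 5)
Your proof is correct and follows essentially the same route as the paper: validity via Lemma~\ref{le:valid}, then Lemma~\ref{le:projection} and Lemma~\ref{le:zeroes} to force equal coefficients off $J$, and the $|J|$ affinely independent $J$-circuits to pin the restricted equation down to a multiple of $\sum_{j\in J}a_jx_j=\alpha$. The only difference is cosmetic: you subtract $c$ times (\ref{eq:affine}) once and treat $c=0$ and $c\neq 0$ uniformly, whereas the paper splits these into two cases; your version is if anything slightly cleaner.
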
\label{th:undom_facets}

\begin{proof}
Inequality (\ref{eqineq}) is valid by Lemma~\ref{le:valid}.  To show (\ref{eqineq}) is facet defining, let (\ref{eqeq}) be any equation satisfied by all circuits $x$
that satisfy (\ref{eqineq}) at equality.  Recall that all circuits satisfy (\ref{eq:affine}).  It suffices to show that (\ref{eqeq}) is a linear combination of (\ref{+facet_eq}) and (\ref{eq:affine}).

Let $S=\{x^1(J), \ldots, x^m(J)\}$.  Because $|J|\geq 1$ and $S$ is therefore nonempty, at least one \mbox{$J$-circuit} $x^i(J)\in S$ satisfies (\ref{eqineq}) at equality.  Lemma~\ref{le:projection} therefore implies that at least one circuit $x^i$ satisfies (\ref{eqineq})
at equality.  Thus since $|J| \leq n-4$, we have from
Lemma~\ref{le:zeroes} that $d_i=d_j$ for all $i,j \notin J$.

We first suppose that $d_j=0$ for all $j\notin J$.  Then (\ref{eqeq}) has the form
\begin{equation}
\sum_{j \in J} d_j x_j=\delta 
\label{eqeq1}
\end{equation}
Because $|J|$ affinely independent \mbox{$J$-circuits} satisfy (\ref{+facet_eq}) and therefore (\ref{eq_dJ}), these two equations are
the same up to a scalar multiple.  Thus (\ref{eqeq}) is a linear combination of (\ref{+facet_eq}) and (\ref{eq:affine}), where the latter has multiplier zero.

We now suppose that $d_j\neq 0$ for $j\notin J$.  Because the $d_j$s are equal for all $j\notin J$, we can without loss of generality write (\ref{eqeq}) as
\[
\sum_{j\in J} d_jx_j + \sum_{j\notin J} x_j = \delta
\]
This is a linear combination of (\ref{+facet_eq}) and (\ref{eq:affine}) if the following is a scalar multiple of (\ref{+facet_eq}):
\begin{equation}
\sum_{j\in J} (d_j-1)x_j = \delta - \sum_{j=1}^n v_j
\label{eq_dJ2}
\end{equation}
But this follows from the fact that $|J|$ affinely independent \mbox{$J$-circuits} satisfy (\ref{+facet_eq}) and (\ref{eq_dJ2}).
$\Box$
\end{proof}

A simple corollary sometimes suffices to show that inequalities are facet defining.

\begin{corollary} \label{co:main}
If $J$ is as in Theorem~\ref{th:main}, (\ref{eqineq}) is valid, and at least $|J|$ affinely independent \mbox{$J$-circuits} satisfy (\ref{eqineq}) at equality, then (\ref{eqineq}) is facet defining.
\end{corollary}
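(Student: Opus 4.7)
The plan is to show that Corollary~\ref{co:main} is essentially an immediate repackaging of Theorem~\ref{th:main} via Lemma~\ref{le:valid}. Theorem~\ref{th:main} has two substantive hypotheses: (a) every $J$-circuit in $S$ (the set of undominated $J$-circuits with respect to $(J_+,J_-)$) satisfies (\ref{eqineq}), and (b) at least $|J|$ affinely independent $J$-circuits satisfy (\ref{eqineq}) at equality. Hypothesis (b) is assumed outright in the corollary. Hypothesis (a) is what needs to be extracted from the assumption that (\ref{eqineq}) is valid.

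The extraction is exactly Lemma~\ref{le:valid}. That lemma states that (\ref{eqineq}) is valid for $H_n(v)$ \emph{if and only if} every undominated $J$-circuit with respect to $(J_+,J_-)$ satisfies it. The "only if" direction is what I need: assuming (\ref{eqineq}) is valid immediately gives that every $J$-circuit in $S$ satisfies the inequality, which is precisely hypothesis (a).

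So my proof would be: invoke the "only if" direction of Lemma~\ref{le:valid} on the validity hypothesis to obtain condition (a) of Theorem~\ref{th:main}; combine with the affine-independence hypothesis (b) already assumed; apply Theorem~\ref{th:main} to conclude that (\ref{eqineq}) is facet defining. There is no real obstacle here — the corollary is just the convenient form of Theorem~\ref{th:main} in which one replaces the somewhat technical combinatorial check "all undominated $J$-circuits satisfy the inequality" with the more easily verified geometric condition of validity. The only thing to be careful about is preserving the same $J_+$ and $J_-$ (defined by the signs of the coefficients $a_j$) across the two statements, which is automatic since the corollary refers back to Theorem~\ref{th:main} for the definition of $J$.
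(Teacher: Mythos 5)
Your proposal is correct and matches the paper's own proof exactly: the paper likewise invokes the ``only if'' direction of Lemma~\ref{le:valid} to convert validity into the condition that all undominated \mbox{$J$-circuits} satisfy (\ref{eqineq}), and then applies Theorem~\ref{th:main}. No issues.
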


\begin{proof}
If (\ref{eqineq}) is valid, then it is satisfied by all undominated \mbox{$J$-circuits}, and the conditions of Theorem~\ref{th:main} apply.  
$\Box$
\end{proof}

To apply Theorem~\ref{th:main} (or Corollary~\ref{co:main}), one must identify a set of affinely independent $J$-circuits.  However, the number of circuits required is only the number $|J|$ of terms included in the facet-defining inequality, as opposed to $n$ circuits in traditional arguments based on affine independence.  The theorem can therefore be regarded as a lifting lemma.  It will allow us to exploit patterns in the selection of terms to be included, so as to establish several classes of facets.

Finally, we note that the conditions of Theorem~\ref{th:main} are necessary as well as sufficient for (\ref{eqineq}) to be facet defining.

\begin{theorem} \label{th:main2}
Consider any inequality (\ref{eqineq}) that is facet-defining for a
hamiltonian circuit polytope $H_n(v)$.  Let $J_+=\{j\;|\;a_j>0\}$ and
$J_-=\{j\;|\;a_j<0\}$.  Then (\ref{eqineq}) is satisfied by all undominated \mbox{$J$-circuits} with respect to $(J_+,J_-)$, and at least $|J|$ affinely independent \mbox{$J$-circuits} satisfy (\ref{+facet_eq}).
\end{theorem}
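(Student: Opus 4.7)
The plan is to prove the two claims in turn. The first is immediate: a facet-defining inequality is in particular valid for $H_n(v)$, so Lemma~\ref{le:valid} gives directly that (\ref{eqineq}) is satisfied by every undominated \mbox{$J$-circuit} with respect to $(J_+,J_-)$.

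For the second claim, I would start with a dimension count. The hypothesis $|J|\leq n-4$ forces $n\geq 5$, so Theorem~\ref{th:dimension} gives $\dim H_n(v)=n-1$, and the facet defined by (\ref{eqineq}) has affine dimension $n-2$. Hence there exist $n-1$ affinely independent circuits $x^1,\ldots,x^{n-1}$ satisfying (\ref{eqineq}) at equality, and their $J$-projections $x^1(J),\ldots,x^{n-1}(J)$ are \mbox{$J$-circuits} satisfying (\ref{+facet_eq}). What remains is to show that at least $|J|$ of these projections are affinely independent in $\mathbb{R}^{|J|}$.

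I would argue this by contradiction. If not, the projections span an affine subspace of dimension at most $|J|-2$, so there is an equation $\sum_{j\in J} d_j x_j=\delta$ linearly independent from (\ref{+facet_eq}) that is satisfied by all of them. Padding with $d_j=0$ for $j\notin J$ yields a linear equation satisfied by the $x^i$ themselves, and since these points affinely span the facet the padded equation holds on the entire affine hull of the facet. The facet-defining hypothesis tells us this affine hull is exactly the intersection of (\ref{eq:affine}) with the hyperplane $\sum_{j\in J} a_jx_j=\alpha$, so the padded equation must be a linear combination of those two. Comparing coefficients of $x_j$ for $j\notin J$ forces the multiplier of (\ref{eq:affine}) to vanish, and then comparing coefficients for $j\in J$ forces $(d_j)_{j\in J}$ to be proportional to $(a_j)_{j\in J}$, contradicting the assumed independence. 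The main subtle step is the passage from a deficiency of affine span on the projection side to a genuine extra linear relation on the full facet, which the zero-padding construction handles cleanly.
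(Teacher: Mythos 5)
Your proposal is correct, and its overall shape matches the paper's proof: the first claim is dispatched by Lemma~\ref{le:valid} in both, and the second claim is obtained in both by projecting affinely independent circuits lying on the facet onto the $J$-coordinates. The difference is in how much of the second claim is actually argued. The paper simply asserts that among the projections $\bar{x}^1(J),\ldots$ one can find $|J|$ affinely independent \mbox{$J$-circuits}; this is not automatic (a generic coordinate projection of affinely independent points loses up to $n-|J|$ dimensions, which a priori only guarantees $|J|-1$ affinely independent projections), so the assertion genuinely needs the supporting argument you supply. Your route --- identify the affine hull of the facet as the intersection of (\ref{eq:affine}) with $\sum_{j\in J}a_jx_j=\alpha$, zero-pad any hypothetical extra relation on the projections, and compare coefficients on and off $J$ to force it to be a multiple of (\ref{+facet_eq}) --- closes exactly this gap, and it is where the real content of the theorem lies. (You also correctly use $n-1$ affinely independent circuits on the facet, since the facet has dimension $n-2$; the paper's proof says $n$, which is a slip.) Two small points worth making explicit if you write this up: the two defining hyperplanes of the affine hull are non-parallel because $|J|\leq n-4$ leaves indices outside $J$, and the contradiction at the end is with independence of the full affine functionals (coefficients together with the constant term), which your linear-combination computation does deliver since the multiplier of (\ref{eq:affine}) vanishes.
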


\begin{proof}
Because (\ref{eqineq}) is valid, Lemma~\ref{le:valid} implies that it is satisfied by all undominated \mbox{$J$-circuits}.  Furthermore, because (\ref{eqineq}) is facet defining, it is 
satisfied at equality by $n$ affinely independent circuits
$\bar{x}^1, \ldots, \bar{x}^n$.  Then $\{\bar{x}^1(J), \ldots,
\bar{x}^n(J)\}$ contains some subset $\{\bar{x}^{j_1}(J), \ldots,
\bar{x}^{j_m}(J)\}$ of $|J|=m$ affinely independent
\mbox{$J$-circuits}, which satisfy (\ref{eqineq}).  
$\Box$
\end{proof}

\section{Generating Undominated Circuits}
\label{greedy}

A simple greedy procedure can be used to generate all \mbox{$J$-circuits}
$\bar{x}(J)$ that are undominated with respect to $(J_+,J_-)$.
It is applied for each ordering $j_1, \ldots, j_m$ of the elements
of $J$.  First, let $\bar{x}_{j_1}$ be the smallest domain value
$v_i$ if $j_1\in J_+$, or the largest if $j_1\in J_-$.  Then let
$\bar{x}_{j_2}$ be the smallest (or largest) remaining domain value
that does not create a cycle.  Continue until all $\bar{x}_j$ for
$j\in J$ are defined.  The precise algorithm appears in
Fig.~\ref{fig:greedy}.

\begin{figure}
\centering
\fbox{
\parbox[c]{6in}{
\begin{tabbing}
xxx \= xxx \= xxx \= xxx \= \kill
For each ordering $j_1, \ldots, j_m$ of the elements of $J$: \\
\> Let $\bar{J}=\{1, \ldots, n\}$ and $J'=\emptyset$. \\
\> For $i=1, \ldots, m$: \\
\> \> Add $j_i$ to $J'$. \\
\> \> If $j_i \in J_+$ then let $\bar{x}_{j_i}$ be the minimum value $v_k$ in $\{v_i\;|\;i\in \bar{J}\}$ \\
\> \> \> such that $\bar{x}(J')$ is a $J'$-circuit. \\
\> \> Else let $\bar{x}_{j_i}$ be the maximum value $v_k$ in $\{v_i\;|\;i\in \bar{J}\}$ \\
\> \> \> such that $\bar{x}(J')$ is a $J'$-circuit. \\
\> \> Remove $k$ from $\bar{J}$. \\
\> Add $\bar{x}(J)$ to the list of undominated \mbox{$J$-circuits}.
\end{tabbing}
}
}
\vspace{-1ex}
\caption{Greedy procedure for generating undominated \mbox{$J$-circuits}.  Input: tuple $v$ of domain values, index set $J$, and partition $(J_+,J_-)$ of $J$.  Output: a complete list of \mbox{$J$-circuits} that are undominated with respect to $(J_+,J_-)$.}
\label{fig:greedy}
\vspace{2ex}
\end{figure}

To prove that the greedy procedure is correct, it is convenient to write $x_j\prec y_j$ when either $x_j < y_j$ and $j\in J_+$ or $x_j > y_j$ and $j\in J_-$.

\begin{theorem} \label{th:greedy}
The greedy procedure of Fig.~\ref{fig:greedy} generates \mbox{$J$-circuits}
that are undominated with respect to $(J_+,J_-)$.
\end{theorem}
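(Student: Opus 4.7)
The plan is to prove the soundness claim: every $J$-circuit output by the procedure is undominated with respect to $(J_+,J_-)$. That the output is indeed a $J$-circuit is immediate, since the inner loop only accepts a value $v_k$ that keeps $\bar x(J')$ a $J'$-circuit; an induction on $|J'|$ finishes this part in one line.

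For undominatedness I would argue by contradiction. Fix an ordering $j_1,\ldots,j_m$ used by the procedure and let $\bar x(J)$ be the resulting output. Suppose some distinct $J$-circuit $y(J)$ dominates $\bar x(J)$, i.e., $y_j\le \bar x_j$ for $j\in J_+$ and $y_j\ge \bar x_j$ for $j\in J_-$, with strict inequality somewhere. Let $i$ be the least index for which $y_{j_i}\ne \bar x_{j_i}$; by the minimality of $i$, the strict inequality must occur first at position $j_i$, which, in the $\prec$ notation introduced just before the theorem, says $y_{j_i}\prec \bar x_{j_i}$.

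The heart of the proof is to show that $y_{j_i}$ was actually a candidate available to the greedy at iteration~$i$. Two bookkeeping facts are needed. First, writing $y_{j_i}=v_k$, the index $k$ is still in $\bar J$ at iteration~$i$: the values removed from $\bar J$ so far are $\bar x_{j_1},\ldots,\bar x_{j_{i-1}}$, which equal $y_{j_1},\ldots,y_{j_{i-1}}$ by the choice of $i$, and $y_{j_i}$ is distinct from all of these because every $J$-circuit is a sub-permutation of $\{v_1,\ldots,v_n\}$ on $J$, as is implicit in the extension construction of Lemma~\ref{le:projection}. Second, replacing $\bar x_{j_i}$ by $y_{j_i}$ produces the same partial assignment on $J'=\{j_1,\ldots,j_i\}$ as $y(J')$, and $y(J')$ is a $J'$-circuit because any cycle subsequence on $J'$ would also be a cycle subsequence on $J$, contradicting $y(J)$ being a $J$-circuit. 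Hence $y_{j_i}$ was among the candidates the greedy considered.

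Since greedy picks the most extreme valid candidate --- the minimum if $j_i\in J_+$ and the maximum if $j_i\in J_-$ --- the actual choice $\bar x_{j_i}$ is at least as extreme as $y_{j_i}$ in the $\prec$ ordering, which directly contradicts $y_{j_i}\prec \bar x_{j_i}$. The main obstacle, though mostly routine, is the bookkeeping in the previous paragraph: certifying that $y_{j_i}$ really is both unused in $\bar J$ and admissible as a $J'$-circuit extension at iteration~$i$. Once the implicit distinctness of values of a $J$-circuit is pinned down, everything else in the argument is automatic and the contradiction closes.
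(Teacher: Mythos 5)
Your proof is correct and follows essentially the same contradiction argument as the paper's: take the first index where a dominating $J$-circuit differs, note that its value there is still available and still yields a $J'$-circuit, and contradict the greedy choice. The paper compresses the availability/admissibility bookkeeping into a single sentence, whereas you spell it out; the substance is identical.
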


\begin{proof}
Let $\bar{x}(J)$ be a \mbox{$J$-circuit} generated by the
procedure for a given ordering $j_1, \ldots, j_m$.  To see that
$\bar{x}(J)$ is undominated with respect to $(J_+,J_-)$, assume
otherwise. Then there exists a \mbox{$J$-circuit} $\bar{y}(J)$ that dominates $\bar{x}(J)$ such that $\bar{y}_{j_t} \prec \bar{x}_{j_t}$ for some $t\in\{1, \ldots, m\}$. Let $t$ be the
smallest such index, so that $\bar{x}_{j_k} = \bar{y}_{j_k}$ for
$k=1, \ldots, t-1$.  This contradicts the greedy construction of
$\bar{x}$, because $\bar{y}_{j_t}$ is available when $\bar{x}_{j_t}$
is assigned to $x_{j_t}$.  $\Box$
\end{proof}

As an example, consider circuit$(x_1,\ldots,x_7)$ where each $x_j$ has domain $\{v_1, \ldots, v_7\}$.  The undominated \mbox{$J$-circuits} of $J=\{1,3,4\}$ with respect to $(J,\emptyset)$ can be generated by considering the six orderings of $1,3,4$ listed on the left below.  The
resulting undominated \mbox{$J$-circuits} appear on the right.
\[
\begin{array}{c@{\hspace{5ex}}c}
(j_1,j_2,j_3) & (x_1,x_3,x_4) \\
\ \vspace{-2.4ex} \\
\hline \vspace{-2.4ex} \\
(1,3,4) & (v_2,v_1,v_3) \vspace{.5ex} \\
(1,4,3) & (v_2,v_4,v_1) \vspace{.5ex} \\
(3,1,4) & (v_2,v_1,v_3) \vspace{.5ex} \\
(3,4,1) & (v_4,v_1,v_2) \vspace{.5ex} \\
(4,1,3) & (v_2,v_4,v_1) \vspace{.5ex} \\
(4,3,1) & (v_3,v_2,v_1) 
\end{array}
\]
There is only one undominated $J$-circuit with respect to $(\{1,3\},\{4\})$, \mbox{because} all six orderings result in the
same \mbox{$J$-circuit} $(v_2,v_1,v_7)$.

It remains to show that the greedy procedure finds all undominated \mbox{$J$-circuits}. We will first prove this for the partition $(J,\emptyset)$ because the \mbox{argument} simplifies considerably in this case.  Thus we assume that circuit $x$ dominates circuit $x'$ when $x\leq x'$.  The proof for the general case appears in the Appendix.

\begin{theorem}  \label{th:simplifiedgreedycomplete} Any undominated $J$-circuit with respect to $(J,\emptyset)$ can be generated in a greedy fashion for some ordering of the indices in $J$.
\end{theorem}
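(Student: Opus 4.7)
The plan is a direct exchange argument using a canonical ordering of $J$. Given an undominated $J$-circuit $\bar{x}(J)$, order the elements of $J$ as $j_1,\ldots,j_m$ so that $\bar{x}_{j_1}<\bar{x}_{j_2}<\cdots<\bar{x}_{j_m}$; the strict inequalities hold because any $J$-circuit assigns distinct values. I will show that the greedy procedure of Fig.~\ref{fig:greedy} with this ordering outputs $\bar{x}(J)$. Suppose for contradiction that step $i$ is the first at which greedy picks a value $v_k\neq \bar{x}_{j_i}$. By greedy minimality $v_k<\bar{x}_{j_i}$; since greedy agreed with $\bar{x}$ on steps $1,\ldots,i-1$ we have $v_k\notin\{\bar{x}_{j_1},\ldots,\bar{x}_{j_{i-1}}\}$; and the sorted ordering forces $\bar{x}_{j_t}\geq \bar{x}_{j_i}>v_k$ for $t\geq i$, so $v_k$ is altogether unused by $\bar{x}$.

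The goal is now to construct a $J$-circuit $y$ with $y\leq\bar{x}$ and $y\neq\bar{x}$, contradicting undominatedness. I split on whether the $\bar{x}$-chain through $v_k$ (the directed path obtained by following $v\mapsto\bar{x}_v$ whenever defined) reaches $v_{j_i}$. If it does not, the single-coordinate change $y_{j_i}=v_k$ (with $y_j=\bar{x}_j$ otherwise) suffices: distinctness holds because $v_k$ was unused, the greedy-feasibility of the partial assignment $\{j_t\to\bar{x}_{j_t}: t<i\}\cup\{j_i\to v_k\}$ rules out the self-loop $v_k=v_{j_i}$, and the new edge $j_i\to v_k$ can close a cycle only if the chain from $v_k$ returns to $v_{j_i}$. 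If the chain does reach $v_{j_i}$, write it as $v_k=v_{k_0}\to v_{k_1}\to\cdots\to v_{k_L}=v_{j_i}$. By greedy-feasibility at step $i$, the edges of this chain cannot all lie in the already-assigned prefix $\{j_1,\ldots,j_{i-1}\}$, so some source $k^*=k_{s^*}$ with $s^*<L$ is a remaining index, and by the sorted ordering $\bar{x}_{k^*}>\bar{x}_{j_i}$. I then perform the two-coordinate swap $y_{j_i}=v_k$ and $y_{k^*}=\bar{x}_{j_i}$, with every other coordinate unchanged; this strictly improves $\bar{x}$ at both $j_i$ and $k^*$.

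The main obstacle is verifying that the two-coordinate $y$ is a valid $J$-circuit. Distinctness is straightforward ($v_k$ was unused, and $\bar{x}_{j_i}$ simply migrates from $j_i$ to $k^*$); the absence of a self-loop at $k^*$ --- that is, $\bar{x}_{j_i}\neq v_{k^*}$ --- follows because otherwise $v_{k^*}$ would receive two incoming $\bar{x}$-edges (one along the chain from $v_{k_{s^*-1}}$ and one from $v_{j_i}$), violating that $\bar{x}$ is injective. The cycle check is the crux: the two new edges are $j_i\to v_k$ and $k^*\to v_{k'}$ where $v_{k'}:=\bar{x}_{j_i}$. In $y$, the chain starting at $v_k$ traces the original $\bar{x}$-chain up to $v_{k^*}$, then jumps along the new edge to $v_{k'}$, and from there follows the downstream tail of that same original chain, which by acyclicity of $\bar{x}$ cannot revisit either $v_{k^*}$ or $v_{j_i}$. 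Hence neither new edge lies on a cycle, $y$ is a bona fide $J$-circuit dominating $\bar{x}$, and the desired contradiction is obtained.
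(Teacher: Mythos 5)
Your proof is correct, but it takes a genuinely different route from the paper's. Both arguments use the same sorted ordering $\bar{x}_{j_1}<\cdots<\bar{x}_{j_m}$ and both proceed by contradiction from the first greedy deviation, but the mechanisms differ. The paper compares $\bar{x}$ against the \emph{entire} greedy output $y$: undominatedness forces some later index $u$ with $\bar{x}_{i_u}<y_{i_u}$, and after locating an auxiliary index $t'$ the paper shows that both values $\bar{x}_{i_u}$ and $\bar{x}_{i_{t'}}$ are still unassigned at step $u$ and must each close the \emph{same} cycle, forcing $\bar{x}_{i_u}=\bar{x}_{i_{t'}}$ --- a contradiction inside the greedy run. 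You instead stop at the first deviation and perform an explicit exchange on $\bar{x}$ itself: since the greedy value $v_k$ is strictly smaller than $\bar{x}_{j_i}$ and (by the sorted order) unused anywhere in $\bar{x}(J)$, either a one-coordinate change or a two-coordinate swap along the $\bar{x}$-chain through $v_k$ yields a $J$-circuit dominating $\bar{x}$, contradicting undominatedness directly. Your version is more constructive and closer to the standard exchange/matroid-style justification of greedy algorithms; the paper's version, while less local, is built on machinery (the indices $t,u,t'$ and the ``both values must close the same cycle'' step) that carries over to the general partition $(J_+,J_-)$ in the appendix, where your one-sided exchange would need reworking. Two small details you should make explicit: the source $k^*=k_{s^*}$ cannot equal $j_i$ (otherwise the chain would visit $v_{j_i}$ twice and $\bar{x}$ would contain a cycle), which is what licenses $\bar{x}_{k^*}>\bar{x}_{j_i}$; and the no-self-loop check $\bar{x}_{j_i}\neq v_{k^*}$ needs the separate easy case $s^*=0$, where $v_{k^*}=v_k<\bar{x}_{j_i}$. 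Neither affects the validity of the argument.
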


\begin{proof}
Let $\bar{x}(J)$ be a $J$-circuit that is undominated with
respect to $(J,\emptyset)$.  Let $J=\{i_1, \ldots,
i_m\}$ where $\bar{x}_{i_1} < \cdots < \bar{x}_{i_m}$, and let $y=(y_{i_1}, \ldots, y_{i_m})$ be the greedy solution with respect to the ordering $i_1, \ldots, i_m$. We claim that $\bar{x}_{i_{\ell}}=y_{i_{\ell}}$ for $\ell=1,
\ldots, m$, which suffices to prove the theorem.  

Supposing to the contrary, let $t$ be the smallest index for which \mbox{$\bar{x}_{i_t}\neq y_{i_t}$}.  Clearly $\bar{x}_{i_t} < y_{i_t}$ is inconsistent with the greedy choice, because $\bar{x}_{i_t}$ is available when $y_{i_t}$ is assigned a value. 
Thus we have $\bar{x}_{i_t} > y_{i_t}$.
By hypothesis, $\bar{x}$ is undominated with respect to $J$.  
We therefore have $\bar{x}_{i_{\ell}} < y_{i_{\ell}}$ for some $\ell\in\{t+1, \ldots, m\}$.
Let $u$ be the smallest such index.   
Finally, let $t'$ be the largest index in $\{t, \ldots, u-1\}$ such that $\bar{x}_{i_{t'}} > y_{i_{t'}}$.  
We know that $t'$ exists because $\bar{x}_{i_t} > y_{i_t}$.  
Thus we have two sequences of values related as follows: 
\[
\begin{array}{c@{\;}c@{\;}c@{\;}c@{\;}c@{\;}c@{\;}c@{\;}c@{\;}c@{\;}c@{\;}c@{\;}c@{\;}c@{\;}c@{\;}c@{\;}c@{\;}c@{\;}c@{\;}c}
\bar{x}_{i_1} & < & \cdots & < & 
\bar{x}_{i_{t-1}} & < & \bar{x}_{i_t} & < 
& \cdots & < & \bar{x}_{i_{t'-1}} & < & 
\bar{x}_{i_{t'}} & < & \cdots & < & 
\bar{x}_{i_{u-1}} & < & \bar{x}_{i_u} \\
$\rotatebox[origin=c]{270}{$=$}$ &   &        &   & 
$\rotatebox[origin=c]{270}{$=$}$ &   & $\rotatebox[origin=c]{270}{$>$}$ &   & 
  &   & $\rotatebox[origin=c]{270}{$\geq$}$ &   &  
$\rotatebox[origin=c]{270}{$>$}$ &   &    &   & 
$\rotatebox[origin=c]{270}{$\geq$}$ &      & $\rotatebox[origin=c]{270}{$<$}$  \\
y_{i_1} &   & \cdots &   & y_{i_{t-1}} &   &
y_{i_t} &   & \cdots &   & y_{i_{t'-1}} &   &
y_{i_{t'}} &   & \cdots &   & y_{i_{u-1}} &    &
y_{i_u}
\end{array}
\]

We first show that value $\bar{x}_{i_u}$ has not yet been assigned
in the greedy algorithm when $y_{i_u}$ is assigned a value.  That is, we show that $\bar{x}_{i_u}\not\in\{y_{i_1}, \ldots, y_{i_{u-1}}\}$.
Suppose to the contrary that $\bar{x}_{i_u}=y_{i_{w}}$ for some $w\in \{1 \ldots, u-1\}$.
But this is impossible, because $\bar{x}_{i_u}>\bar{x}_{i_w}\geq y_{i_w}$.  
We next show that value $\bar{x}_{i_{t'}}$ has not yet been assigned
in the greedy algorithm when $y_{i_u}$ is assigned a value.  That is, we show that $\bar{x}_{i_{t'}}\not\in \{y_{i_1}, \ldots, y_{i_{u-1}}\}$.  
To begin with, we have that $\bar{x}_{i_{t'}}\not\in \{y_{i_1}, \ldots,
y_{i_{t'-1}}\}$, by virtue of the same reasoning just applied
to $\bar{x}_{i_u}$.  Also $\bar{x}_{i_{t'}}\neq y_{i_{t'}}$,
since by hypothesis $\bar{x}_{i_{t'}} > y_{i_{t'}}$.  To show
that $\bar{x}_{i_{t'}}\not\in \{y_{i_{t'+1}}, \ldots,
y_{i_{u-1}}\}$, suppose to the contrary that $\bar{x}_{i_{t'}} = y_{i_w}$ for some $w\in \{t'+1, \ldots, u-1\}$.  
Then since $\bar{x}_{i_{t'}} < \bar{x}_{i_w}$, we must have $\bar{x}_{i_w} >
y_{i_w}$.  But this contradicts the definition of $t'$ ($< w$)
as the largest index in $\{1, \ldots, u-1\}$ such that
$\bar{x}_{i_{t'}} > y_{i_{t'}}$.  Thus $\bar{x}_{i_{t'}} \neq y_{i_w}$.  

Because $\bar{x}_{i_u} < y_{i_u}$ and value $\bar{x}_{i_u}$ has
not yet been assigned, setting $y_{i_u} = \bar{x}_{i_u}$ must create
a cycle in $y$, because otherwise setting $y_{i_u} = \bar{x}_{i_u}$
would have been the greedy choice.  Also, setting
$y_{i_u}=\bar{x}_{i_{t'}}$ was not the greedy choice because
$y_{i_u} > \bar{x}_{i_u} > \bar{x}_{i_{t'}}$.  Thus setting
$y_{i_u}=\bar{x}_{i_{t'}}$ must likewise create a cycle in
$y$, because $\bar{x}_{i_{t'}}$ has not yet been assigned. Now
define $G_{y(J)}$ as before and consider the maximal subchain
in $G_{y(J)}$ that contains $y_{i_u}$.  Let the segment
of the subchain up to $y_{i_u}$ be
\[
v_z \rightarrow \cdots \rightarrow v_{i_u} \rightarrow y_{i_u}
\]
Because setting $y_{i_u}=\bar{x}_{i_u}$ creates a cycle in
$y$, we must have $\bar{x}_{i_u} = v_z$.  Similarly,
because setting $y_{i_u}=\bar{x}_{i_{t'}}$ creates a cycle in
$y$, we must have $\bar{x}_{i_{t'}} = v_z$.  This implies
$\bar{x}_{i_u}=\bar{x}_{i_{t'}}$, which is impossible because
$\bar{x}_{i_u}>\bar{x}_{i_{t'}}$.  $\Box$
\end{proof}

\begin{theorem}  \label{th:greedycomplete} Any undominated $J$-circuit with respect to $(J_+,J_-)$ can be generated in a greedy fashion for some ordering of the indices in $J$.
\end{theorem}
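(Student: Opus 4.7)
The plan is to generalize the argument of Theorem~\ref{th:simplifiedgreedycomplete} by building the ordering $i_1,\ldots,i_m$ incrementally rather than via a single sort. At each step $k$, I would choose any index $i_k \in J \setminus \{i_1,\ldots,i_{k-1}\}$ that is \emph{greedy-compatible} at that step, meaning that the greedy procedure's next pick at position $i_k$, given the partial assignment $\bar{x}_{i_1},\ldots,\bar{x}_{i_{k-1}}$, is exactly $\bar{x}_{i_k}$. Because $\bar{x}$ is itself a $J$-circuit, the value $\bar{x}_{i_k}$ is always a feasible candidate at that step (not a self-loop, not already used, and not cycle-creating against the partial assignment), so the only way for $i$ to fail to be greedy-compatible is that some $v \prec_i \bar{x}_i$ is also feasible. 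A natural alternative would be to process $J_+$ in increasing $\bar{x}$-order followed by $J_-$ in decreasing $\bar{x}$-order, but small examples show this single fixed sort can fail to reproduce certain undominated $\bar{x}$, so the flexibility of an incremental ordering appears necessary.

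The key steps would be: (i) formalize the iterative construction and the notion of greedy-compatibility; (ii) observe that if a greedy-compatible index exists at every step $k=1,\ldots,m$, then the resulting ordering $i_1,\ldots,i_m$ makes the greedy procedure of Fig.~\ref{fig:greedy} output $\bar{x}$; (iii) prove, under the hypothesis that $\bar{x}$ is undominated with respect to $(J_+,J_-)$, that at every step $k$ at least one greedy-compatible index exists.

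Step (iii) is the main obstacle and would be argued by contradiction. If at some step $k$ no greedy-compatible index exists, then for every remaining $i \in J \setminus \{i_1,\ldots,i_{k-1}\}$ the greedy pick $v_i^*$ satisfies $v_i^* \prec_i \bar{x}_i$, is distinct from the already-used values $\bar{x}_{i_1},\ldots,\bar{x}_{i_{k-1}}$, and extends the current partial assignment without creating a cycle. I would use this family of strictly preferred alternatives to construct a $J$-circuit $\bar{y}$ dominating $\bar{x}$, contradicting undominatedness. The construction starts with some carefully chosen $i^*$ and sets $\bar{y}_{i^*}=v_{i^*}^*$ while keeping $\bar{y}_j=\bar{x}_j$ otherwise; if a value collision $v_{i^*}^*=\bar{x}_{j^*}$ arises for some other $j^* \in J$, I propagate by further setting $\bar{y}_{j^*}=v_{j^*}^*$, and iterate. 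The delicate part is verifying that $i^*$ can be chosen so that this chain of swaps terminates in a valid $J$-circuit with no newly-created cycle. This analysis would track how each swap modifies the graph $G_{\bar{x}(J)}$ from Lemma~\ref{le:projection}, paralleling the maximal-subchain argument at the end of Theorem~\ref{th:simplifiedgreedycomplete}, but now with case splits according to whether the swapped indices lie in $J_+$ or $J_-$ and using the no-cycle guarantee built into the definition of each $v_i^*$.
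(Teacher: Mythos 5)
Your overall strategy differs from the paper's: you want to build the ordering online, choosing at each step an index whose greedy pick already agrees with $\bar{x}$, and to justify this by showing that if no such index exists then $\bar{x}$ is dominated. The paper instead fixes a specific adaptive ``implied ordering'' up front (interleaving $J_+$ sorted by increasing $\bar{x}$-value with $J_-$ sorted by decreasing $\bar{x}$-value, deferring to $J_-$ exactly when the next $J_+$ index would receive the wrong value), runs the greedy procedure to obtain $y(J)$, and then derives a contradiction from $y(J)\neq\bar{x}(J)$ by locating the first position $\bar{\ell}$ where $\bar{x}\succ y$ and a later position $\hat{\ell}$ where undominatedness forces $\bar{x}\prec y$, and showing via the maximal subchains of $G_{y(J)}$ that two distinct values would both have to coincide with the head $v_z$ of the subchain containing $y_{i_u}$. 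Your observation that a single fixed sort does not suffice and that the ordering must be chosen adaptively is correct and is exactly what the paper's implied ordering accomplishes.

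The gap is that your step (iii) --- the entire content of the theorem --- is not carried out, and the sketch you give for it does not obviously close. Your plan is to take the family of strictly preferred greedy alternatives $v_i^*$ and splice them into $\bar{x}$ via a chain of value swaps to produce a dominating $J$-circuit. But each $v_i^*$ is guaranteed cycle-free only against the \emph{current partial assignment} $\bar{x}(\{i_1,\ldots,i_{k-1}\})$, not against the other unassigned entries of $\bar{x}$ or against the other substituted values $v_j^*$; substituting several of them simultaneously can create a cycle that no single substitution creates, and resolving a value collision $v_{i^*}^*=\bar{x}_{j^*}$ by replacing $\bar{x}_{j^*}$ with $v_{j^*}^*$ changes the graph again, so the ``no-cycle guarantee built into the definition of each $v_i^*$'' does not transfer to the spliced circuit. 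You give no rule for selecting $i^*$, no argument that the propagation terminates rather than revisiting indices, and no verification that the final $\bar{y}$ is a $J$-circuit --- and this is precisely where all of the work in the paper's proof lives (its cycle argument succeeds because it only ever needs to substitute a \emph{single} value at a \emph{single} position $y_{i_u}$, for two candidate values, which forces both candidates to equal the same subchain head $v_z$). There is also a subtler unaddressed point: you allow an \emph{arbitrary} greedy-compatible index at each step, so you need the existence claim to hold for every reachable prefix set, not just along one good ordering; if that stronger claim fails you would have to specify a choice rule, which is exactly the role the paper's implied ordering plays. As written, the proposal is a plan whose central lemma is unproven.
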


\begin{proof}
See the Appendix. 
\end{proof}

\section{Permutation and Two-term Facets}
\label{permutation}

We begin by identifying two special classes of facets of
$H_n(v)$, namely, permutation facets and two-term facets.

The {\em permutohedron} $P_n(v)$ for an arbitrary domain $\{v_1,\ldots, v_n\}$ can be defined as the convex hull of all points whose coordinates are permutations of $v_1, \ldots, v_n$.  We refer to the facets of $P_n(v)$ as {\em permutation facets}.  The circuit
polytope $H_n(v)$ is contained in $P_n(v)$ because every circuit
$(x_1, \ldots, x_n)$ is a permutation of $v_1, \ldots, v_n$.
This means that every facet-defining inequality for $P_n(v)$ is
valid for circuit but not necessarily facet defining.  This raises
the question as to which permutation facets are also circuit facets.
We will identify a large family of permutation facets that can be
immediately recognized as circuit facets.

The permutohedron $P_n(v)$ has dimension $n-1$, and its affine hull is described by 
\begin{equation}
\sum_{j=1}^n x_j = \sum_{j=1}^n v_j
\label{eq:2termaffine}
\end{equation}
The facets of $P_n(v)$ are identified in \cite{Hoo00,WilYan01}, and they are
defined by
\begin{equation}
\sum_{j\in J} x_j \geq \sum_{j=1}^{|J|} \hspace{0ex} v_j
\label{eq:permfacet}
\end{equation}
for all $J\subset \{1, \ldots, n\}$ with $1\leq |J|\leq n-1$.
(Recall that $0\leq v_1 <\cdots < v_n$.)  This result is generalized
in \cite{Hooker12} to domains with more than $n$ elements.

For example, the permutohedron $P_3(v)$ with $v=(2,4,5)$ is
defined by
\[
\begin{array}{ll}
x_1+x_2+x_3=11 \vspace{.5ex} \\
x_i \geq 2, \;\mbox{for $i=1,2,3$} \vspace{.5ex} \\
x_i+x_j \geq 6, \; \mbox{for distinct $i,j\in\{1,2,3\}$}
\end{array}
\]
We can see at this point that a facet-defining inequality for
$P_n(v)$ need not be facet-defining for $H_n(v)$.  The inequality
$x_1+x_2\geq 6$ is facet-defining for $P_3(v)$ but not for $H_3(v)$,
which is the line segment from $(4,5,2)$ to $(5,2,4)$.  However, a large family of inequalities are facet defining for both $H_n(v)$ and $P_n(v)$.

\begin{theorem}\label{th:permfacet1}
The inequality (\ref{eq:permfacet}) defines a facet of $H_n(v)$ if
$1\leq |J| \leq n-4$ and $j>2$ for all $j\in J$.
\end{theorem}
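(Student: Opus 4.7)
The plan is to invoke Corollary~\ref{co:main}. Validity of (\ref{eq:permfacet}) is immediate: the values $x_1,\ldots,x_n$ of any circuit are a permutation of $v_1,\ldots,v_n$, so any $m := |J|$ of them sum to at least $v_1+\cdots+v_m$. It remains to exhibit $m$ affinely independent \mbox{$J$-circuits} satisfying (\ref{eq:permfacet}) at equality, and equality forces $\{\bar{x}_j : j\in J\}=\{v_1,\ldots,v_m\}$. Write $J=\{j_1,\ldots,j_m\}$ with $j_1<\cdots<j_m$; the hypothesis $j>2$ gives $j_l\geq l+2$ for every $l$.

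For the base \mbox{$J$-circuit} I take $\bar{x}^1$ with $\bar{x}^1_{j_i}=v_i$. Introduce the partial map $\phi$ on $\{1,\ldots,m\}$ defined by $\phi(k)=l$ when $k=j_l$; then $j_l\geq l+2$ gives $\phi(k)\leq k-2$, so every $\phi$-orbit strictly decreases. The only edges of $\bar{x}^1$ lying inside $J$ are $j_i\to j_{\phi(i)}$ for $i\in\mathrm{dom}(\phi)$, and strict decrease of $\phi$ rules out any cycle, so $\bar{x}^1$ is a \mbox{$J$-circuit}.

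For each $l\in\{2,\ldots,m\}$ I build $\bar{x}^l$ by a single value swap of $\bar{x}^1$. The ``$j_1$-swap'' exchanges $v_1$ and $v_l$ between positions $j_1$ and $j_l$; the ``$j_2$-swap'' exchanges $v_2$ and $v_l$ between positions $j_2$ and $j_l$. In either case the new edge out of $j_l$ lands on vertex $1$ or $2\notin J$ and so is harmless; a cycle can only be created by the new edge out of $j_1$ (respectively $j_2$). Tracing the resulting chain via $\phi$ shows the $j_1$-swap creates a cycle iff $l\in B_1:=\{k:\phi^r(k)=j_1\text{ for some }r\geq 0\}$, and the analogous statement holds for $B_2$. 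The sets $B_1$ and $B_2$ are disjoint: if $l$ lay in both with relevant iterates $r<r'$, then applying $\phi^{r'-r}$ to $\phi^r(l)=j_1$ would give $\phi^{r'-r}(j_1)=j_2$, contradicting $\phi^{r'-r}(j_1)\leq j_1-2(r'-r)<j_2$. Hence at least one swap is safe for every $l$, and I let $\bar{x}^l$ be the result of using a safe one.

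Finally I check affine independence. Each difference $d^l:=\bar{x}^l-\bar{x}^1$ has nonzero support contained in $\{j_1,j_l\}$ or $\{j_2,j_l\}$. For $l\geq 3$ the coordinate $j_l$ is touched only by $d^l$, since every competing $d^{l'}$ with $l'\in\{2,\ldots,m\}\setminus\{l\}$ touches only $\{j_1,j_{l'}\}$ or $\{j_2,j_{l'}\}$. Reading $\sum_{l=2}^m\alpha_l d^l=0$ at coordinates $j_m,j_{m-1},\ldots,j_3$ in succession forces $\alpha_l=0$ for $l\geq 3$; the remainder $\alpha_2 d^2=0$ then gives $\alpha_2=0$ because $d^2$ carries the nonzero entry $v_2-v_1$ at position $j_1$. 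Corollary~\ref{co:main} then yields the facet claim. The main obstacle is the chain-tracing cycle analysis, and the hypothesis $j>2$ is precisely what delivers $\phi(k)\leq k-2$ and lets the $B_1$-versus-$B_2$ trick go through.
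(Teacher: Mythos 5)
Your proof follows the same skeleton as the paper's: validity is immediate, and by Corollary~\ref{co:main} it suffices to produce $m=|J|$ affinely independent $J$-circuits tight for (\ref{eq:permfacet}), starting from the base assignment $\bar{x}^1_{j_i}=v_i$ and perturbing it by $m-1$ transpositions of values. The difference is in which transpositions you choose. The paper swaps the \emph{adjacent} values $v_{i-1}$ and $v_i$ between the \emph{adjacent} positions $j_{i-1}$ and $j_i$; since $j_i\geq i+2$ for all $i$, every position $j$ in every perturbed circuit still carries a value $v_k$ with $k<j$, so the no-cycle check is a one-line monotonicity observation and no case analysis is needed. Your transpositions, always against position $j_1$ or $j_2$, force you into the chain-tracing argument with $B_1$ and $B_2$. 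That argument is correct in outline, but it leaves two loose ends.

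First, in proving $B_1\cap B_2=\emptyset$ you only treat the case $r<r'$ (the iterate reaching $j_1$ preceding the one reaching $j_2$); that case is actually vacuous, since the $\phi$-chain is strictly decreasing and $j_1<j_2$ forces $j_2$ to occur \emph{earlier} in the chain. The case that needs an argument is $r>r'$, where your bound gives only $\phi^{r-r'}(j_2)\leq j_2-2(r-r')\leq j_2-2$, which need not be below $j_1$; you must add the observation that $\phi(j_2)=2<3\leq j_1$ and $2\notin\mathrm{dom}(\phi)$, so the chain from $j_2$ drops below $j_1$ immediately and terminates. Second, for $l=2$ the ``$j_2$-swap'' is the identity (it exchanges $v_2$ with itself at position $j_2$), so ``use a safe swap'' could yield $d^2=0$ and destroy affine independence; your final paragraph silently assumes the $j_1$-swap was used for $l=2$. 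That assumption is harmless --- the $j_1$-swap is always safe for $l=2$ because $2\notin J$ implies $2\notin B_1$ --- but it has to be stated. Both repairs are short, so the proof is salvageable as written; still, the paper's adjacent-swap construction renders the entire $B_1$/$B_2$ apparatus unnecessary.
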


\begin{proof}
Let $J=\{j_1, \ldots, j_m\}$.  Inequality (\ref{eq:permfacet}) is clearly valid because the variables $x_{j_1},\ldots,x_{j_m}$ must have pairwise distinct values.  By Corollary~\ref{co:main}, it suffices to exhibit $m$ affinely independent \mbox{$J$-circuits} that satisfy (\ref{eq:permfacet}) at equality.  Consider the following assignments to $(x_{j_1},\ldots,x_{j_m})$:

\begin{equation}
\begin{array}{ccccccc}
x_{j_1} & x_{j_2} & x_{j_3} & \cdots & x_{j_{m-1}} &  x_{j_m} \vspace{.5ex} \\
\hline
v_1     & v_2     & v_3     & \cdots & v_{m-1}     & v_m  \\
v_2     & v_1     & v_3     & \cdots & v_{m-1}     & v_m \\
v_1     & v_3     & v_2     & \cdots & v_{m-1}     & v_m \\
\vdots  & \vdots  & \vdots  &        & \vdots      & \vdots \\
v_1     & v_2     & v_3     & \cdots & v_m         & v_{m-1}
\end{array} \label{eq:fam31}
\end{equation}
The $i$th assignment is obtained from the first by swapping $v_{i-1}$ and $v_i$.
These \mbox{assign}ments obviously satisfy (\ref{eq:permfacet}) at equality.  They are also affinely independent,
as can be seen by subtracting the first row from each row.  It \mbox{remains} to show that the assignments create no cycles and are therefore \mbox{$J$-circuits}.  For this, it suffices to show that each $x_{j_i}$ is assigned a value $v_k$ with $k<j_i$.  The first assignment satisfies this condition because $2<j_1$ and $j_1<\cdots <j_m$ imply that $i<j_i-1$ for $i=1, \ldots, m$.  The $i$th \mbox{assign}ment agrees with the first on the \mbox{values} of all variables except $x_{j_{i-1}}, x_{j_i}$.  It sets  $(x_{j_{i-1}},x_{j_i})=(v_i,v_{i-1})$, which satisfies $i<j_{i-1}$ because $i-1<j_{i-1}-1$, and satisfies $i-1<j_i$ because $i<j_i-1$.  The $i$th assignment therefore satisfies the condition and is a \mbox{$J$-circuit} for $i=2, \ldots, m$. $\Box$
\end{proof}

Another special class of facet-defining inequalities are those
containing two terms, which can be listed in closed form.

\begin{corollary} \label{co:twoterm}
If $n\geq 6$, the two-term facets of $H_n(v)$ are precisely those
defined by
\begin{eqnarray}
&& x_i+x_j \geq v_1+v_2, \;\;\mbox{for distinct $i,j\in \{3,\ldots,n\}$} \vspace{.5ex} \label{eq:2term1} \\
&& (v_3-v_1)x_1 + (v_3-v_2)x_2 \geq v_3^2 - v_1v_2 \vspace{.5ex} \label{eq:2term2} \\
&& (v_2-v_1)x_2 + (v_3-v_1)x_i \geq v_2v_3 - v_1^2, \;\; \mbox{for $i\in \{3, \ldots, n\}$} \vspace{.5ex} \label{eq:2term3} \\
&& (v_{n-1}-v_{n-2})x_{n-1} + (v_n-v_{n-2})x_n \leq v_n v_{n-1} - v_{n-2}^2  \vspace{.5ex} \label{eq:2term5} \\
&& (v_n-v_{n-2})x_i + (v_n-v_{n-1})x_{n-1} \leq v_n^2 - v_{n-1}v_{n-2}, \label{eq:2term6} \\
&& \hspace{40ex} \mbox{for $i\in \{1, \ldots, n-2\}$} \nonumber
\end{eqnarray}
\end{corollary}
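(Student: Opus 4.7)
The plan is to apply Corollary~\ref{co:main} in the forward direction and Theorem~\ref{th:main2} in the reverse direction. With $|J|=2$ and $n\geq 6$ we have $|J|\leq n-4$, so both results apply. Because $|J|=2$ there are only two orderings of $J$, the greedy algorithm of Figure~\ref{fig:greedy} produces at most two undominated $J$-circuits, which coincides with the number of affinely independent points needed on the face to pin down a two-term facet (any two distinct points in $\mathbb{R}^2$ being automatically affinely independent).

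\textit{Forward direction.} Inequality (\ref{eq:2term1}) is immediate from Theorem~\ref{th:permfacet1} with $|J|=2$. For (\ref{eq:2term2}) with $J=\{1,2\}$ and $J_+=J$, I would run the greedy algorithm to obtain the two undominated $J$-circuits $(x_1,x_2)=(v_2,v_3)$ and $(v_3,v_1)$; each satisfies the inequality at equality with the displayed coefficients, which both verifies validity (via Lemma~\ref{le:valid}) and supplies the two affinely independent $J$-circuits demanded by Corollary~\ref{co:main}. For (\ref{eq:2term3}) with $J=\{2,i\}$, $i\geq 3$, and $J_+=J$, the greedy produces $(x_2,x_i)=(v_1,v_2)$ from one ordering and $(v_3,v_1)$ from the other, and the displayed coefficients are chosen precisely to put both on the face. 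The $\leq$ families (\ref{eq:2term5}) and (\ref{eq:2term6}) are handled by the symmetric analysis at the upper end of the domain, with the greedy now selecting the largest available values.

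\textit{Reverse direction.} I would take an arbitrary facet-defining inequality with exactly two nonzero terms, $a_ix_i+a_jx_j\geq\alpha$ (the $\leq$ case being symmetric), and organize the analysis by the signs of $(a_i,a_j)$, which fix the partition $(J_+,J_-)$, and by the positions of $i,j$ with respect to the distinguished sets $\{1,2\}$ and $\{n-1,n\}$. In each subcase I would run the greedy algorithm on both orderings of $J$ to enumerate the undominated $J$-circuits. If fewer than two distinct ones appear, the face has dimension at most one, so no facet of this form exists. If two do appear, imposing the two equalities $a_ix_i^k+a_jx_j^k=\alpha$ for $k=1,2$ determines the ratio $a_i:a_j$ (and hence $\alpha$) up to scaling, and that ratio matches exactly one of the listed families. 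Mixed-sign inequalities are excluded uniformly, since minimizing one coordinate while maximizing the other leads the two orderings to the same greedy output, leaving only a single undominated $J$-circuit.

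The main technical obstacle is the bookkeeping of self-loop and 2-cycle constraints in the greedy algorithm for small indices. When $J=\{1,2\}$ the forbidden self-loops $x_1=v_1$, $x_2=v_2$ together with the 2-cycle $(x_1,x_2)=(v_2,v_1)$ force the greedy to skip up to $v_3$ at the second step, and this skip is precisely what produces the non-unit coefficients in (\ref{eq:2term2}). Similarly, for $J=\{2,i\}$ with $i\geq 3$, the ordering $(i,2)$ forces $x_2=v_3$ because $v_2$ would be a self-loop, yielding the asymmetric coefficients in (\ref{eq:2term3}). Once these boundary interactions are laid out, the analysis reduces to a finite mechanical enumeration in which each index pattern maps to at most one facet family from the list.
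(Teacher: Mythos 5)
Your overall strategy is the same as the paper's: in both directions you enumerate the sign pattern of $(a_i,a_j)$ and the positions of $i,j$ relative to $\{1,2\}$ and $\{n-1,n\}$, compute the undominated $J$-circuits from the two orderings of $J$, and invoke Theorem~\ref{th:main} (resp.\ Theorem~\ref{th:main2}) to certify (resp.\ constrain) the facets. The forward direction and the all-positive cases of the reverse direction match the paper's computations exactly, including the boundary effects of self-loops and the 2-cycle for $J=\{1,2\}$ and $J=\{2,i\}$. However, two of your blanket claims in the reverse direction fail on specific index pairs. In the all-negative case with $i,j\in\{1,\ldots,n-2\}$, the two undominated $J$-circuits are $(x_i,x_j)=(v_n,v_{n-1})$ and $(v_{n-1},v_n)$, which pin down the facet-defining inequality $x_i+x_j\leq v_{n-1}+v_n$. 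This is a two-term facet-defining inequality that is \emph{not} among (\ref{eq:2term1})--(\ref{eq:2term6}), so your assertion that the determined ratio ``matches exactly one of the listed families'' is false here; the paper must (and does) dispose of this family separately, arguing it is redundant given (\ref{eq:2term1}) and the affine-hull equation (\ref{eq:2termaffine}). Your proposal is silent on it.

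The second gap is in the mixed-sign case. Your claim that minimizing one coordinate while maximizing the other always leads both orderings to the same greedy output fails for $\{i,j\}=\{1,n\}$ with the positive coefficient on $x_n$ and the negative one on $x_1$: the extreme choices $x_n=v_1$ and $x_1=v_n$ collide in the 2-cycle $1\to n\to 1$, so the ordering that fixes $x_n$ first yields $(x_1,x_n)=(v_{n-1},v_1)$ while the ordering that fixes $x_1$ first yields $(v_n,v_2)$. Neither dominates the other, so there are two affinely independent undominated $J$-circuits, and by Theorem~\ref{th:main} the inequality $(v_n-v_{n-1})x_n-(v_2-v_1)x_1\geq (v_n-v_{n-1})v_1-(v_2-v_1)v_{n-1}$ is facet defining (for $u=(1,\ldots,6)$ this is $x_1-x_6\leq 4$), yet it appears nowhere in the list. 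To be fair, this subcase is also mishandled in the paper's own proof: its mixed-sign case ``$i>1$ and $j<n$'' asserts the unique $J$-circuit $(v_1,v_n)$, which for $(i,j)=(n,1)$ is exactly the forbidden 2-cycle. So you have inherited rather than introduced this error, but as written neither your argument nor the paper's establishes the ``precisely'' in the statement, and you should at minimum isolate the pair $\{1,n\}$ for separate treatment.
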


\vspace{-2ex}
\begin{proof}
Consider an arbitrary two-term inequality $a_ix_i+a_jx_j\geq \alpha$.  If we suppose $a_i,a_j>0$, four cases can be distinguished.  {\em Case 1:} $i,j>2$.  The two permutations of $i,j$ generate the two undominated \mbox{$J$-circuits} $(v_1,v_2)$ and $(v_2,v_1)$, where $J=\{i,j\}$.  The only equation satisfied by these two affinely independent \mbox{$J$-circuits}, up to a positive scalar multiple, is $x_i+x_j=v_1+v_2$.  So by Theorems~\ref{th:main} and~\ref{th:main2}, all facet-defining inequalities for this case have the form (\ref{eq:2term1}).  {\em Case 2:} $(i,j)=(1,2)$.  The undominated \mbox{$J$-circuits} are $(v_2,v_3)$ and $(v_3,v_1)$, which satisfy only (\ref{eq:2term2}) at equality, up to a positive scalar multiple.  {\em Case~3.} $i=1$, $j>2$.  The two permutations of $1,j$ generate the same undominated \mbox{$J$-circuit} $(v_2,v_3)$.  Thus no two affinely independent \mbox{$J$-circuits} satisfy $a_1x_1+a_jx_j=\alpha$, and by Theorem~\ref{th:main2} there are no facet-defining inequalities in this case.  {\em Case 4.} $i=2$, $j>2$.  The undominated \mbox{$J$-circuits} are $(v_1,v_2)$ and $(v_3,v_1)$, which satisfy only (\ref{eq:2term3}) at equality.  

Now if we suppose $a_i,a_j<0$, similar reasoning yields the facets (\ref{eq:2term5})--(\ref{eq:2term6}) and
\[
x_i+x_j \leq v_{n-1}+v_n, \;\;\mbox{for distinct $i,j\in \{1,\ldots,n-2\}$} 
\]
which is redundant of (\ref{eq:2term1}) because it is the sum of (\ref{eq:2term1}) and the negation of (\ref{eq:2termaffine}).  Finally, if $a_i>0$ and $a_j<0$, we consider four cases:  $i>1$ and $j<n$; $i=1$ and $j<n$; $i>1$ and $j=n$; and $(i,j)=(1,n)$.  The two permutations of $i,j$ generate only one \mbox{$J$-circuit} in each case, respectively $(v_1,v_n)$, $(v_2,v_n)$, $(v_1,v_{n-1})$, and $(v_2,v_{n-1})$.  This means by Theorem~\ref{th:main2} that there are no additional facets.  The situation is similar when $a_i<0$ and $a_j>0$. $\Box$
\end{proof}

\section{A Hierarchy of Facets}

We now describe a hierarchy of facets of increasing complexity.  To simplify discussion, we suppose in this section that each variable has domain $\{v_1,\ldots,v_n\}=\{1, \ldots, n\}$, and we consider only facets defined by inequalities with nonnegative coefficients.  We therefore focus on $H_n(u)$, where $u=(1, \ldots, n)$.

The intuition behind the hierarchy is as follows.  On level~0 of the hierarchy, the number of variables in an inequality (\ref{eqineq}) is less than the smallest index in $J$.  The undominated \mbox{$J$-circuits} are simply the permutations of $1, \ldots, m$, because the greedy algorithm of Section~\ref{greedy} never encounters a \mbox{cycle}.  As a result, the only facets on level~0 are permutation facets.   In higher levels of the hierarchy, the index of the first variable is smaller than the number of variables in the facet, which increases the combinatorial complexity of undominated \mbox{$J$-circuits} and yields more complicated facets.  We will exhaustively identify facets for levels~0, 1, and~2, although one can in principle use similar methods to identify facets on higher levels.

Let level~$d$ of the hierarchy consist of inequalities of the form
\begin{equation}
\sum_{j=m-d+1}^m \hspace{-2.5ex} a_jx_j + \hspace{-.7ex} \sum_{i=d+1}^m  \hspace{-1ex} a_{j_i}x_{j_i} \geq \alpha
\label{eq:fam1}
\end{equation}
where each $a_j>0$, where $m<j_{d+1}< \cdots < j_m$, and where $\{x_{j_{d+1}},\ldots,x_{j_m}\}$ is any subset of $m-d$ variables in $\{x_{m+1}, \ldots, x_n\}$.  Thus (\ref{eq:fam1}) contains $m$ variables, and $m-d$ variables are absent before the first variable.  Note also that the first $d$ variables are consecutive.  We will identify one family of facet-defining inequalities on level 0, two families on level 1, and five families on level 2.

First, we have immediately from Theorem~\ref{th:permfacet1} that level 0 contains a class of permutation facets.
\begin{corollary}
The following level~0 inequalities are facet defining for $H_n(u)$:
\[
\sum_{i=1}^m x_{j_i} \geq {\textstyle\frac{1}{2}} m(m+1), \;\;\; m=2, \ldots, n
\]
for any set $\{x_{j_1}, \ldots, x_{j_m}\}$ of $m$ variables in $\{x_{m+1}, \ldots, x_n\}$.
\end{corollary}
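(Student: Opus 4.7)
The plan is to deduce the corollary as a direct specialization of Theorem~\ref{th:permfacet1} to the domain $v = u = (1, \ldots, n)$. Under this specialization, $v_j = j$, so the right-hand side of the generic permutation-facet inequality (\ref{eq:permfacet}) becomes
\[
\sum_{j=1}^{|J|} v_j \;=\; \sum_{j=1}^{m} j \;=\; \tfrac{1}{2} m(m+1),
\]
which matches the bound stated in the corollary upon taking $J = \{j_1, \ldots, j_m\}$.

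Next I would verify the two hypotheses of Theorem~\ref{th:permfacet1} in this setting. First, $j > 2$ for every $j \in J$: since $J \subseteq \{m+1, \ldots, n\}$ and $m \geq 2$, each $j_i \geq m+1 \geq 3$. Second, $1 \leq |J| \leq n-4$: the lower bound is clear since $|J| = m \geq 2$; and the upper bound holds because the hypothesis $\{x_{j_1}, \ldots, x_{j_m}\} \subseteq \{x_{m+1}, \ldots, x_n\}$ forces $m \leq n - m$, which in the range where both the containment and the dimension restriction of Theorem~\ref{th:permfacet1} apply gives $m \leq n-4$. With both hypotheses verified, Theorem~\ref{th:permfacet1} immediately yields that $\sum_{i=1}^{m} x_{j_i} \geq \tfrac{1}{2} m(m+1)$ defines a facet of $H_n(u)$, which is exactly the conclusion of the corollary.

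The only real wrinkle is the technical $|J| \leq n-4$ restriction inherited from Theorem~\ref{th:permfacet1}: for the few densest cases (where $m$ is close to $n/2$ and $n$ is small), Theorem~\ref{th:permfacet1} does not apply directly, and one would need to invoke the conjecture discussed earlier in the paper covering the densest facets. This is the only place where the argument is not completely immediate; otherwise the corollary is essentially a rewriting of Theorem~\ref{th:permfacet1} in the special domain $\{1, \ldots, n\}$, and I would not expect any substantial obstacle to completing the proof.
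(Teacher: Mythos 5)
Your proposal is correct and is exactly the paper's argument: the paper states this corollary as following ``immediately from Theorem~\ref{th:permfacet1},'' and your specialization to $v=u=(1,\ldots,n)$ with the verification that $j_i\geq m+1>2$ and the right-hand side equals $\frac{1}{2}m(m+1)$ is just that derivation spelled out. Your observation that the $|J|\leq n-4$ hypothesis is not automatic for small $n$ with $m$ near $n/2$ (i.e., the corollary implicitly needs $n-m\geq 4$, a proviso the paper attaches to its level-1 and level-2 theorems but omits here) is a fair and accurate caveat rather than a flaw in your argument.
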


For level 1 we have the following.

\begin{theorem}
The following level 1 inequalities are facet defining for $H_n(u)$:
\begin{equation}
x_m + \sum_{i=2}^m x_{j_i} \geq {\textstyle\frac{1}{2}}m(m+1), \;\;\;m=3, \ldots, \lceil n/2 \rceil
\label{eq:fam2a} 
\end{equation}
\begin{equation}
x_m + 2\sum_{i=2}^m x_{j_i} \geq m^2+1, \;\;\;m=2, \ldots, \lceil n/2 \rceil
\label{eq:fam2} 
\end{equation}
for any subset $\{x_{j_2}, \ldots, x_{j_m}\}$ of $m-1$ variables in $\{x_{m+1}, \ldots, x_n\}$, provided \mbox{$n-m\geq 4$}.
\end{theorem}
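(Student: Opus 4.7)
The plan is to apply Corollary~\ref{co:main} with $J=\{m,j_2,\ldots,j_m\}$. Since $n-m\geq 4$ gives $|J|=m\leq n-4$, it suffices to check validity of each inequality and to exhibit $m$ affinely independent $J$-circuits at which the inequality is tight.

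Validity rests on two observations about any $J$-circuit: its entries $x_m,x_{j_2},\ldots,x_{j_m}$ are $m$ distinct values in $\{1,\ldots,n\}$, and $x_m\neq m$ because the length-one sub-cycle $x_m=v_m$ is forbidden. For~(\ref{eq:fam2a}), the sum of $m$ distinct elements of $\{1,\ldots,n\}$ is at least $m(m+1)/2$. For~(\ref{eq:fam2}), write $S=\{x_m,x_{j_2},\ldots,x_{j_m}\}$ so that the left-hand side equals $2\sum_{s\in S}s-x_m$; splitting on $x_m\leq m-1$ versus $x_m\geq m+1$ and minimizing independently in each case gives a minimum of $m^2+1$, attained either by $x_m=m-1$ with $\{x_{j_i}\}=\{1,\ldots,m-2,m\}$, or by $x_m=m+1$ with $\{x_{j_i}\}=\{1,\ldots,m-1\}$.

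To exhibit $m$ affinely independent tight $J$-circuits for~(\ref{eq:fam2a}), I would imitate the proof of Theorem~\ref{th:permfacet1}: set $y^1=(x_m,x_{j_2},\ldots,x_{j_m})=(1,2,\ldots,m)$, and for $i=2,\ldots,m$ let $y^i$ be obtained by swapping the two entries holding the values $i-1$ and $i$. For~(\ref{eq:fam2}), I would combine both minimizer families: take $y^0=(m+1,1,2,\ldots,m-1)$, let $y^i$ for $i=2,\ldots,m-1$ be obtained by swapping the entries at tuple positions $i$ and $i+1$, and let $y^m=(m-1,1,2,\ldots,m-2,m)$. In each construction the $(m-1)\times m$ matrix of differences $y^i-y^1$ (resp.\ $y^i-y^0$) contains an $(m-1)\times(m-1)$ upper-triangular submatrix with unit diagonal, yielding affine independence.

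The main obstacle is showing that each constructed assignment is actually a $J$-circuit rather than merely a list of distinct values. Any cycle in the partial graph must have all of its vertices in $J$; but in every one of our assignments, each $x_k$ for $k\in J$ takes a value in $\{1,\ldots,m+1\}$, and $J\cap\{1,\ldots,m+1\}\subseteq\{m,m+1\}$. The only possible cycles are therefore the self-loop $x_m=m$ or the 2-cycle $x_m=m+1$, $x_{m+1}=m$. Each $y^i$ has $x_m\in\{1,2,m-1,m+1\}$, which differs from $m$ under the hypotheses $m\geq 3$ for~(\ref{eq:fam2a}) and $m\geq 2$ for~(\ref{eq:fam2}); and whenever $x_m=m+1$ with $m+1\in J$, the corresponding $x_{m+1}$ lies in $\{1,\ldots,m-1\}$, which also differs from $m$, so the 2-cycle is ruled out. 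Hence every $y^i$ is a $J$-circuit and the theorem follows from Corollary~\ref{co:main}.
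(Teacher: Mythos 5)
Your proposal is correct, and its skeleton (verify validity, exhibit $m$ affinely independent tight \mbox{$J$-circuits}, invoke the lifting machinery of Section~5) matches the paper's; the tight circuits you construct for (\ref{eq:fam2}) are essentially those in the paper's display (\ref{eq:fam29}), namely one representative from each of the two minimizer families together with adjacent transpositions within one family. Where you genuinely diverge is the validity step for (\ref{eq:fam2}). The paper routes this through Theorem~\ref{th:main}: it invokes the greedy-completeness result (Theorem~\ref{th:simplifiedgreedycomplete}) to characterize the undominated \mbox{$J$-circuits} as permutations of the values $1,\ldots$, splits them into ``$x_m$ last'' and ``$x_m$ not last'' types, and bounds the left-hand side over each type. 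You instead relax to the set of all assignments of distinct values with $x_m\neq m$ (a superset of the \mbox{$J$-circuits}), and minimize $2\sum_{s\in S}s-x_m$ directly over the two cases $x_m\leq m-1$ and $x_m\geq m+1$; this reaches the same two minimizers while bypassing Lemma~\ref{le:valid} and the greedy machinery entirely, so that Corollary~\ref{co:main} suffices in place of Theorem~\ref{th:main}. That is a legitimate and somewhat more self-contained argument. (You also reprove the (\ref{eq:fam2a}) case by hand, whereas the paper simply cites Theorem~\ref{th:permfacet1}; both work, since $m\geq 3$ puts every index of $J$ above $2$.) One small point: your enumeration of possible cycles should also list the self-loop $x_{m+1}=m+1$ alongside $x_m=m$ and the 2-cycle $x_m=m+1$, $x_{m+1}=m$; it is excluded by the same observation you already make, namely that the value $m+1$ is only ever assigned to $x_m$ in your constructions.
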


\proof{Proof.} Here $J=\{m,j_2,\ldots,j_m\}$.  Inequality (\ref{eq:fam2a}) is facet defining due to Theorem~\ref{th:permfacet1}.  To show that (\ref{eq:fam2}) is facet defining, it suffices to show that it is satisfied by all undominated \mbox{$J$-circuits} and is satisfied at equality by $m$ affinely independent \mbox{$J$-circuits}.  From Theorem~\ref{th:simplifiedgreedycomplete}, all undominated \mbox{$J$-circuits} correspond to permutations of the elements of $J$, or equivalently, permutations $x'$ of $(x_m,x_{j_2},\ldots,x_{j_m})$.  We distinguish two cases: permutations in which $x_m$ is last, resulting in {\em type~1} circuits, and permutations in which $x_m$ is not last, resulting in {\em type~2} circuits.  Type~1 \mbox{$J$-circuits} have the form $x'=(1,\ldots,m-1,m+1)$, because once the first $m-1$ variables in $x'$ are assigned $1, \ldots, m-1$, $x_m$ cannot be assigned the next value $m$ and must be assigned $m+1$.  For all such \mbox{$J$-circuits}, the left-hand side of (\ref{eq:fam2}) has value 
\[
(m+1) + 2\left(1 + 2 + \cdots + (m-1)\right)  = m^2 + 1
\]
which satisfies (\ref{eq:fam2}).  Type~2 \mbox{$J$-circuits} have the form $x''=(1,\ldots,m)$ where $x''$ is any permutation of $(x_m,x_{j_2},\ldots,x_{j_m})$ in which $x_m$ is not last.  Because $x_m$ has the smallest coefficient in (\ref{eq:fam2}), the LHS of (\ref{eq:fam2}) is minimized over type~2 \mbox{$J$-circuits} when $x_m$ occurs next to last in $x''$, in which case the LHS has value
\[
(m-1) + 2\left( 1 + 2 + \cdots + (m-2) + m \right) = m^2+1
\]
Thus (\ref{eq:fam2}) is again satisfied.

We now exhibit $m$ affinely independent \mbox{$J$-circuits} satisfying (\ref{eq:fam2}) at equality.  The first $m-1$ \mbox{$J$-circuits} below are type~1, and the last is type 2:
\begin{equation}
\begin{array}{c@{\hspace{6ex}}cccccc}
x_m     & x_{j_2} & x_{j_3} & x_{j_4} & \cdots & x_{j_{m-1}} &  x_{j_m} \vspace{.5ex} \\
\hline
m-1     & 1       & 2       & 3       & \cdots & m-2         & m  \\
m-1     & 2       & 1       & 3       & \cdots & m-2         & m \\
m-1     & 1       & 3       & 2       & \cdots & m-2         & m \\
\vdots  & \vdots  & \vdots  & \vdots  &        & \vdots      & \vdots \\
m-1     & 1       & 2       & 3       & \cdots & m           & m-2 \\
\ \\
m+1     & 1       & 2       & 3       & \cdots & m-2         & m-1 
\end{array} \label{eq:fam29}
\end{equation}
These satisfy (\ref{eq:fam2}) at equality, as noted above.  The   $(m-1)\times(m-1)$ submatrix in the upper right is obtained by swapping pairs of elements in the first row. After suitable row operations, (\ref{eq:fam29}) becomes
\begin{equation}
\begin{array}{c@{\hspace{6ex}}cccccc}
x_m     & x_{j_2} & x_{j_3} & x_{j_4} & \cdots & x_{j_{m-1}} &  x_{j_m} \vspace{.5ex} \\
\hline
(m-1)/s & 1       & 0       & 0       & \cdots & 0           & 0  \\
(m-1)/s & 0       & 1       & 0       & \cdots & 0           & 0 \\
(m-1)/s & 0       & 0       & 1       & \cdots & 0           & 0 \\
\vdots  & \vdots  & \vdots  & \vdots  &        & \vdots      & \vdots \\
(m-1)/s & 0       & 0       & 0       & \cdots & 0           & 1 \\
\ \\
m+1     & 1       & 2       & 3       & \cdots & m-2         & m-1 
\end{array} \label{eq:fam29a}
\end{equation}
where $s=\frac{1}{2}m(m-1)+1$ is the sum of the entries in each row of the submatrix.  After further row operations, the last row is reduced to \mbox{$2+(m-1)/s$} followed by $m-1$ zeros, resulting in a triangular matrix (after \mbox{rearranging} columns) with nonzeros on the diagonal.  The matrix (\ref{eq:fam29}) is therefore nonsingular, and the rows are affinely independent. $\Box$
\endproof \medskip

Finally, we identify five classes of level~2 facets.  

\begin{theorem}
The following level~2 inequalities are facet defining for $H_n(u)$:
\begin{eqnarray}
&& \hspace{-8ex} x_{m-1} + x_m + \sum_{i=3}^m x_{j_i} \geq {\textstyle\frac{1}{2}}m(m+1), \;\;\; m=4, \ldots, \lceil (n+1)/2 \rceil \label{eq:fam10} \vspace{.5ex} \\
&& \hspace{-8ex} 2x_{m-1} + x_m + 2\sum_{i=3}^m x_{j_i} \geq m^2+1, \;\;\; m=4, \ldots, \lceil (n+1)/2 \rceil \label{eq:fam11} \vspace{.5ex}  \\
&& \hspace{-8ex} 2x_{m-1} + x_m + 4\sum_{i=3}^m x_{j_i} \geq m(2m-3)+5, \;\;\; m=3, \ldots, \lceil (n+1)/2 \rceil \label{eq:fam12} \vspace{.5ex} \\
&& \hspace{-8ex} 3x_{m-1} + 2x_m + 4\sum_{i=3}^m x_{j_i} \geq m(2m-1)+4, \;\;\; m=3, \ldots, \lceil (n+1)/2 \rceil \label{eq:fam13} \vspace{.5ex} \\
&& \hspace{-8ex} 3x_{m-1} + 2x_m + 5\sum_{i=3}^m x_{j_i} \geq {\textstyle\frac{5}{2}}m(m-1) + 6, \;\;\; m=3, \ldots, \lceil (n+1)/2 \rceil \label{eq:fam14}
\end{eqnarray}
for any given set $\{x_{j_3}, \ldots, x_{j_m}\}$ of $m-2$ variables in $\{x_{m+1}, \ldots, x_n\}$, if \mbox{$n-m\geq 4$}.
\end{theorem}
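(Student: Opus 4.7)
The plan is to apply Theorem~\ref{th:main} to each of the five families separately, with $J = \{m-1, m, j_3, \ldots, j_m\}$ of size $|J| = m$; the hypothesis $n - m \geq 4$ gives $|J| \leq n - 4$ as required. Family (\ref{eq:fam10}) is immediate from Theorem~\ref{th:permfacet1}, because every index in $J$ satisfies $j \geq m-1 \geq 3 > 2$. For each of (\ref{eq:fam11})--(\ref{eq:fam14}) I will (a)~enumerate the undominated $J$-circuits by running the greedy procedure of Theorem~\ref{th:simplifiedgreedycomplete}, (b)~check the inequality on each such circuit, and (c)~exhibit $m$ affinely independent tight $J$-circuits.

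The observation that drives step (a) is that among the values $1, \ldots, m$, only $m-1$ and $m$ are themselves indices in $J$, so the ``dependency graph'' on $J$ has at most two edges while greedy is drawing values from $\{1,\ldots,m\}$. The only cycles greedy must avoid are therefore the self-loops $x_{m-1} = m-1$ and $x_m = m$ together with the 2-cycle $(x_{m-1}, x_m) = (m, m-1)$. Sweeping greedy over every ordering of $J$ and classifying outputs by the value multiset $V = \{x_j : j \in J\}$ yields only four types: one with $V = \{1, \ldots, m\}$ (avoiding the three forbidden patterns); one with $V = \{1, \ldots, m-1, m+1\}$ and $(x_{m-1}, x_m) = (m+1, m-1)$; one with $V = \{1, \ldots, m-2, m, m+1\}$ and $(x_{m-1}, x_m) = (m, m+1)$; and one with $V = \{1, \ldots, m-1, m+1\}$, $x_m = m+1$, and $x_{m-1} \in \{1, \ldots, m-2\}$. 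Within each type, the remaining coordinates $(x_{j_3}, \ldots, x_{j_m})$ may permute the unused values freely, which supplies the flexibility exploited in step~(c).

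For step (b), within a fixed type $\mathrm{sum}(V)$ is constant, so each LHS reduces to an expression linear in $(x_{m-1}, x_m)$ subject only to that type's restriction on those two coordinates, and a short arithmetic comparison with the RHS both establishes validity and identifies the tight types for each inequality. For step (c), I choose one base tight $J$-circuit, generate $m-3$ additional tight $J$-circuits by transposing adjacent entries within $(x_{j_3}, \ldots, x_{j_m})$ (which keeps us in the same tight type), and then pick two or three more tight circuits drawn from the other tight types to cover the $(x_{m-1}, x_m)$ directions. The resulting difference matrix carries a staircase of $\pm 1$ entries in the $(x_{j_3}, \ldots, x_{j_m})$ block together with a pair of rows whose $(x_{m-1}, x_m)$ support has rank two; a pivot-by-pivot inspection analogous to the one used for matrix~(\ref{eq:fam29a}) in the level-1 theorem forces overall rank $m-1$.

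The main obstacle is certifying completeness of the enumeration in step~(a), since there are $m!$ greedy orderings. I expect to handle this with a case split on where $m-1$ and $m$ lie in the ordering and on which forbidden pattern triggers a greedy skip. Since any value $v > m$ that greedy assigns to some $x_{j_i}$ points to a variable whose image lies in $\{1, \ldots, m-2\}$, and these indices are outside $J$, such values cannot extend a chain back into $J$ to create longer cycles. This will show that every greedy trajectory terminates in one of the four types listed above, after which the arithmetic validity checks and the affine-independence row reductions are mechanical.
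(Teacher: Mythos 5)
Your proposal follows essentially the same route as the paper's proof: inequality (\ref{eq:fam10}) is dispatched by Theorem~\ref{th:permfacet1}, and for (\ref{eq:fam11})--(\ref{eq:fam14}) you invoke Theorem~\ref{th:main}, classify the greedy outputs guaranteed by Theorem~\ref{th:simplifiedgreedycomplete} into a small number of types (your four value-multiset classes coincide with the paper's five position-based types, with its types~4 and~5 merged), verify the inequality type by type, and build the $m$ affinely independent tight \mbox{$J$-circuits} from transpositions within the $(x_{j_3},\ldots,x_{j_m})$ block plus a few circuits from other tight types. The only organizational difference is that the paper checks validity for all four families at once via a single parametrized inequality $ax_{m-1}+bx_m+c\sum_i x_{j_i}\geq\beta$ with coefficient conditions, whereas you treat the families separately; this is cosmetic.
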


\proof{Proof.} Inequality (\ref{eq:fam10}) is facet defining due to Theorem~\ref{th:permfacet1}.  For the remaining inequalities we apply Theorem~\ref{th:main}.  First, we show that (\ref{eq:fam11})--(\ref{eq:fam14}) are satisfied by all undominated \mbox{$J$-circuits}, where $J=\{m-1,m,j_3, \ldots, j_m\}$.  This can be shown individually for each inequality, but we can establish the result for all at once by showing that 
\begin{equation}
ax_{m-1} + bx_m + c\sum_{i=3}^m x_{j_i} \geq \beta 
\label{eq:fam20}
\end{equation}
is satisfied by all undominated \mbox{$J$-circuits}, given that
\[
\vspace{-2ex}
\beta = (m-2)a + (m+1)b + {\textstyle\frac{1}{2}}(m-3)(m-2)c + (m-1)c 
\]
and 
\begin{equation}
2a\geq c, \;\;\; c\geq a, \;\;\; 3a\geq 2b+c, \;\;\; 2a\geq 3b, \;\;\; c\geq 2b
\label{eq:fam21}
\end{equation}
Note that the inequalities (\ref{eq:fam11})--(\ref{eq:fam14}) have the form (\ref{eq:fam20}) and satisfy the relations (\ref{eq:fam21}).  It can also be checked that $\beta$ is equal to the right-hand side of each inequality (\ref{eq:fam11})--(\ref{eq:fam14}).  It therefore suffices to show that all undominated \mbox{$J$-circuits} satisfy (\ref{eq:fam20}).

To show this, we again apply Theorem~\ref{th:simplifiedgreedycomplete}.   We partition permutations $x'$ of $x=(x_{m-1},x_m,x_{j_3}, \ldots, x_{j_m})$ into 5 classes, which give rise to 5 types of \mbox{$J$-circuits}.  It suffices to show that \mbox{$J$-circuits} of all 5 types satisfy (\ref{eq:fam20}).
\begin{description}
\item {\em Type 1.}  $x_m$ occurs last and $x_{m-1}$ next to last in $x'$.  Circuits constructed in a greedy fashion have the form $x'=(1,\ldots,m-2,m,m+1)$.  This is because once the first $m-2$ variables in $x'$ are assigned $1, \ldots, m-2$, variable $x_{m-1}$ cannot be assigned $m-1$ and is therefore assigned $m$.  Now $x_m$ cannot be assigned $m-1$ without creating a cycle with $x_{m-1}$ and is therefore assigned $m+1$.  The LHS of (\ref{eq:fam20}) is
\[
ma + (m+1)b + (m-2)c + {\textstyle\frac{1}{2}}(m-3)(m-2)c \geq \beta
\]
where the inequality follows from the fact that $2a\geq c$.  So \mbox{$J$-circuits} of type~1 satisfy (\ref{eq:fam20}).

\item {\em Type 2.}  $x_m$ occurs last but $x_{m-1}$ does not occur next to last in $x'$.  The circuits have the form $x'=(1,\ldots,m-1,m+1)$.  Because $a\leq c$, the LHS of (\ref{eq:fam20}) is minimized when $x_{m-1}$ occurs second from last in $x'$ (i.e., in position $m-2$), in which case the LHS has value equal to $\beta$.  So \mbox{$J$-circuits} of type~2 satisfy (\ref{eq:fam20}).

\item {\em Type 3.}  $x_{m-1}$ occurs last and $x_m$ next to last in $x'$.  The circuits have the form $x'=(1,\ldots,m-1,m+1)$, for which the LHS of (\ref{eq:fam20}) is
\[
(m+1)a + (m-1)b + (m-2)c + {\textstyle\frac{1}{2}}(m-3)(m-2)c \geq \beta
\]
where the inequality follows from the fact that $3a\geq 2b+c$.  So \mbox{$J$-circuits} of type~3 satisfy (\ref{eq:fam20}).  

\item {\em Type 4.}  $x_{m-1}$ occurs last but $x_m$ does not occur next to last in $x'$.  The circuits have the form $x'=(1,\ldots,m)$.  Because $b\leq c$, the LHS of (\ref{eq:fam20}) is minimized when $x_m$ occurs second from last in $x'$, in which case the LHS has value
\[
ma + (m-2)b + (m-1)c + {\textstyle\frac{1}{2}}(m-3)(m-2)c \geq \beta
\]
where the inequality follows from the fact that $2a\geq 3b$.  So \mbox{$J$-circuits} of type~4 satisfy (\ref{eq:fam20}).

\item {\em Type 5.}  Neither $x_{m-1}$ nor $x_m$ occurs last in $x'$.  The circuits have the form $x'=(1,\ldots,m)$.  Because $b\leq a\leq c$, the LHS of (\ref{eq:fam20}) is minimized when $x_{m-1}$ is second from last and $x_m$ is next to last in $x'$, in which case the LHS has value
\[
(m-2)a + (m-1)b + mc + {\textstyle\frac{1}{2}}(m-3)(m-2)c \geq \beta
\]
where the inequality follows from the fact that $c\geq 2b$.  So \mbox{$J$-circuits} of type~5 satisfy (\ref{eq:fam20}).  
\end{description}
It remains to exhibit, for each inequality (\ref{eq:fam11})--(\ref{eq:fam14}), $m$ affinely independent \mbox{$J$-circuits} that satisfy it at equality.  The scheme for doing so is very similar for (\ref{eq:fam12})--(\ref{eq:fam14}), but somewhat different for (\ref{eq:fam11}).  Beginning with (\ref{eq:fam12}), suppose for the moment that $m>3$.  We use circuits of type~1, 2, and 3, which are the only types that can satisfy (\ref{eq:fam20}) at equality:
\begin{equation}
\begin{array}{cc@{\hspace{6ex}}cccccc}
x_{m-1} & x_m     & x_{j_3} & x_{j_4} & x_{j_5} & \cdots & x_{j_{m-1}} &  x_{j_m} \vspace{.5ex} \\
\hline
m-2     & m+1     & 1       & 2       & 3       & \cdots & m-3         & m-1  \\
m-2     & m+1     & 2       & 1       & 3       & \cdots & m-3         & m-1 \\
m-2     & m+1     & 1       & 3       & 2       & \cdots & m-3         & m-1 \\
\vdots  & \vdots  & \vdots  & \vdots  & \vdots  &        & \vdots      & \vdots \\
m-2     & m+1     & 1       & 2       & 3       & \cdots & m-1         & m-3 \\
\ \\
m       & m+1     & 1       & 2       & 3       & \cdots & m-3         & m-2 \\
m+1     & m-1     & 1       & 2       & 3       & \cdots & m-3         & m-2
\end{array} \label{eq:fam25}
\end{equation}
The first $m-2$ rows are type~2 \mbox{$J$-circuits}, all of which satisfy (\ref{eq:fam20}) at equality.  The last two rows are type~1 and type~3 \mbox{$J$-circuits}, respectively, chosen as above to satisfy (\ref{eq:fam20}) at equality.  The nonsingular $(m-2)\times (m-2)$ submatrix in the upper right is obtained by swapping pairs of elements in the first row.  After suitable row operations (\ref{eq:fam25}) becomes a matrix that is triangular after rearranging columns:
\[
\begin{array}{cc@{\hspace{6ex}}cccccc}
x_{m-1} & x_m     & x_{j_3} & x_{j_4} & x_{j_5} & \cdots & x_{j_{m-1}} &  x_{j_m} \vspace{.5ex} \\
\hline
(m-2)/s & (m+1)/s & 1       & 0       & 0       & \cdots & 0           & 0  \\
(m-2)/s & (m+1)/s & 0       & 1       & 0       & \cdots & 0           & 0 \\
(m-2)/s & (m+1)/s & 0       & 0       & 1       & \cdots & 0           & 0 \\
\vdots  & \vdots  & \vdots  & \vdots  & \vdots  &        & \vdots      & \vdots \\
(m-2)/s & (m+1)/s & 0       & 0       & 0       & \cdots & 0           & 1 \\
\ \\
2+(m-2)/s
        & (m+1)/s & 0       & 0       & 0       & \cdots & 0           & 0 \\
3+2(2s-3)/(m+1)
        &  0      & 0       & 0       & 0       & \cdots & 0           & 0
\end{array} 
\]
where $s=\frac{1}{2}(m-1)(m-2)+1$ is the sum of the elements in an arbitrary row of the $(m-2)\times (m-2)$ submatrix.  Because each element on the diagonal is nonzero, the entire matrix is nonsingular, and the rows are affinely independent.  When $m=3$, we use instead the affinely independent \mbox{$J$-circuits} $(3,4,1)$, $(1,4,2)$, and $(4,2,1)$, which again are of types~1, 2 and 3 and satisfy (\ref{eq:fam12}) at equality.

Affinely independent \mbox{$J$-circuits} of types 2, 4 and 5 can be similarly exhibited for (\ref{eq:fam13}), and circuits of types 2, 3 and 4 for (\ref{eq:fam14}).  Affinely independent \mbox{$J$-circuits} for (\ref{eq:fam11}) are slightly different because only circuits of types~2 and 5 can satisfy (\ref{eq:fam11}) at equality.  Here we are given that $m\geq 4$.  We use the first $m-2$ circuits in (\ref{eq:fam25}) and the following two circuits of type~5:
\[
\begin{array}{cc@{\hspace{6ex}}cccccc}
x_{m-1} & x_m     & x_{j_3} & x_{j_4} & x_{j_5} & \cdots & x_{j_{m-1}} &  x_{j_m} \vspace{.5ex} \\
\hline
1       & m-1     & 2       & 3       & 4       & \cdots & m-2         & m  \\
2       & m-1     & 1       & 3       & 4       & \cdots & m-2         & m 
\end{array} 
\]
These satisfy (\ref{eq:fam11}) at equality because $x_{m-1}$ has the same coefficient as $x_1,x_2$.  An argument similar to the above shows that the \mbox{$J$-circuits} are affinely \mbox{independent}.   $\Box$
\endproof \medskip

The above theorems provide a complete description of facets that appear for all $m\geq d+2$ on levels~$d=0,1,2$.  We can verify this by exhaustive enumeration of facets for $m=d+2$ using Theorems~\ref{th:main} and~\ref{th:main2}.  That is, for each $d$ we use the greedy algorithm to generate all undominated \mbox{$J$-circuits} for $J=\{3,\ldots,d+4\}$.  We then consider the set $I$ of all inequalities (\ref{eqineq}), up to a positive scalar multiple, that are satisfied at equality by an affinely independent subset of $d+2$ undominated \mbox{$J$-circuits}.  Finally, we list the inequalities in $I$ that are satisfied by all the undominated \mbox{$J$-circuits}.  This list contains all inequalities that are facet defining for $m=d+2$, and all of them are described above.  This method can, in principle, be used to identify families of facets on levels 3 and higher, although for each family one must prove that it is facet defining for all $m\geq d+2$, as is done above.

\section{Separation Algorithms}
\label{separation}

There are polynomial-time separation algorithms for all of the classes of facets described in the previous two sections.  Each algorithm identifies a separating facet whenever one exists.

The separation problem is to identify a facet that separates a given solution value $\bar{x}$ of $x=(x_1, \ldots, x_n)$ from the hamiltonian circuit polytope; that is, to find a facet-defining inequality $ax\geq\alpha$ that is violated by $x=\bar{x}$.  Consider first the family (\ref{eq:permfacet}) of permutation facets.  Let $j_1, \ldots, j_{n-2}$ be an ordering of the indices $3, \ldots, n$ such that $\bar{x}_{j_1}\leq\cdots\leq \bar{x}_{j_{n-2}}$.  Then for $m=1, \ldots, n$, check whether  
\begin{equation}
\sum_{i=1}^m x_{j_i} \geq {\textstyle\frac{1}{2}} m(m+1)
\label{eq:sep1}
\end{equation}
is violated by setting $(x_{j_1}, \ldots, x_{j_m})=(\bar{x}_{j_1}, \ldots, \bar{x}_{j_m})$.  Continue until (\ref{eq:sep1}) is violated, at which point a separating facet is discovered.  The procedure has worst-case running time of $\mathcal{O}(n\log n)$, the time required to sort $n$ values.

This procedure identifies a separating permutation facet in the family (\ref{eq:permfacet}) if one exists.  To see this, suppose $\sum_{j\in J'} x_j\geq \frac{1}{2}m(m+1)$ is a separating permutation facet, where $m=|J'|$ and $1,2\not\in J'$. Then because $\bar{x}_{j_1}, \ldots, \bar{x}_{j_m}$ are the $m$ smallest values among $\bar{x}_{j_1}, \ldots, \bar{x}_{j_{n-2}}$, we have 
\[
\sum_{i=1}^m \bar{x}_{j_i} \leq \sum_{j\in J'} \bar{x}_j < {\textstyle\frac{1}{2}}m(m+1)
\]
Thus (\ref{eq:sep1}) is also separating.  

Separation requires only $\mathcal{O}(n)$ time for the two-term facets (\ref{eq:2term1})--(\ref{eq:2term5}).  A separating facet of the form (\ref{eq:2term1}) can be found, if one exists, by checking whether (\ref{eq:2term1}) is violated by setting $(x_i,x_j)=(\bar{x}_{j_1},\bar{x}_{j_2})$, where $\bar{x}_{j_1}$ and $\bar{x}_{j_2}$ are the two smallest values among $\bar{x}_1, \ldots, \bar{x}_n$.  If so, then (\ref{eq:2term1}) is separating with $(i,j)=(j_1,j_2)$.  Facets (\ref{eq:2term2})--(\ref{eq:2term5}) can be separated by enumerating at most $n$ values of the index $i$.

Level~0 facets, level~1 facets of the form (\ref{eq:fam2a}), and level~2 facets of the form (\ref{eq:fam10}) can be separated with the algorithms just described.  A single initial sort of the values $\bar{x}_1, \ldots, \bar{x}_n$ provides the basis for separating all other facets on levels~1 and~2.  For any fixed $m\geq 2$, we can find a separating level~1 facet of the form (\ref{eq:fam2}) as follows, if one exists.  Let $\bar{x}_{j_2}, \ldots, \bar{x}_{j_m}$ be the $m-1$ smallest values in $\{\bar{x}_{m+1}, \ldots, \bar{x}_n\}$.  These values can be identified in $\mathcal{O}(n)$ time by looking through the sorted elements of $\{\bar{x}_1, \ldots, \bar{x}_n\}$ and selecting the first $m-1$ elements $\bar{x}_j$ with $j> m$.  Now check whether (\ref{eq:fam2}) is violated by setting $(x_m,x_{j_1}, \ldots, x_{j_m})$ equal to $(\bar{x}_m,\bar{x}_{j_1}, \ldots, \bar{x}_{j_m})$.  If so, then (\ref{eq:fam2}) is separating.  It can be shown as above that this procedure finds a separating facet for any fixed $m$ if one exists.  We use a similar procedure for the level~2 facets (\ref{eq:fam11})--(\ref{eq:fam14}).  Thus for each $m$, we can identify a separating level~1 and level~2 facet of each type in $\mathcal{O}(n)$ time, if one exists.  By enumerating $\mathcal{O}(n)$ values of $m$, we can execute the entire separation algorithm in time $\mathcal{O}(n\log n+n^2)=\mathcal{O}(n^2)$.

As an illustration, consider circuit$(x_1,\ldots,x_7)$ with each $D_j=\{1, \ldots, 7\}$.  Suppose that $(\bar{x}_1, \ldots, \bar{x}_7)=(7,2.6,1,6.25,7,2.2,1.95)$.  This point belongs to the affine hull described by (\ref{eq:2termaffine}), but it is infeasible if only because it does not consist of
values in the domain.  The following separating cuts are identified by the above algorithms:
\begin{eqnarray}
&& x_3+x_7\geq 3 \vspace{.5ex} \label{eq:sep11} \\
&& x_2 + 2x_3 \geq 5 \vspace{.5ex} \label{eq:sep12} \\
&& x_3+2x_6+2x_7 \geq 10 \vspace{.5ex} \label{eq:sep13} \\
&& 2x_3 + x_4 + 2x_6 + 2x_7 \geq 17 \vspace{.5ex} \label{eq:sep14} \\
&& 2x_3 + x_4 + 4x_6 + 4x_7 \geq 25 \vspace{.5ex} \label{eq:sep15} \\
&& 3x_2 + 2x_3 + 4x_7 \geq 19 \vspace{.5ex} \label{eq:sep16} \\
&& 3x_2 + 2x_3 + 5x_7 \geq 21 \label{eq:sep17}
\end{eqnarray}
Here, (\ref{eq:sep11}) is a permutation facet as well as a 2-term facet, (\ref{eq:sep12}) is a level~1 facet as well as a 2-term facet, (\ref{eq:sep13}) is a level~1 facet, and (\ref{eq:sep14})--(\ref{eq:sep17}) are level~2 facets of the form (\ref{eq:fam11})--(\ref{eq:fam14}), respectively.

\section{Conclusions and Future Research}

We studied the structure of the hamiltonian circuit polytope by establishing its dimension, developing tools for the identification of facets, and using these tools to derive several families of facets.  The tools include necessary and sufficient conditions for an inequality with at most $n-4$ variables to be facet defining, stated in terms of undominated circuits, and a greedy algorithm for generating undominated circuits, for which we proved completeness.  We used a novel approach to identifying families of facet-defining inequalities, based on the structure of variable indices rather than on structured subgraphs.  Finally, we described a \mbox{hier}archy of facets of increasing combinatorial complexity and derived all facets on the first three levels.  We also presented complete polynomial-time separation algorithms for all facets described here.

%

\section*{Appendix.  Proof of Theorem~\ref{th:greedycomplete}}

To prove Theorem~\ref{th:greedycomplete}, we first define for any given circuit $\bar{x}$ an {\em implied ordering} with respect to $(J_+,J_-)$.  The proof will show that if $\bar{x}$ is undominated with respect to $(J_+,J_-)$, then a \mbox{$J$-circuit} that is greedily constructed according to the implied ordering is identical to $\bar{x}(J)$.

For a given $J$-circuit $\bar{x}(J)$, and partition $(J_+,J_-)$, let $J_+=\{i_1, \ldots,i_p\}$ where $\bar{x}_{i_1} < \cdots < \bar{x}_{i_p}$, and let $J_-=\{j_1, \ldots, j_q\}$ where $\bar{x}_{j_1} > \cdots > \bar{x}_{j_q}$.

The implied ordering will be $k_1, \ldots, k_m$.  As we construct the ordering, we construct a \mbox{$J$-circuit} $y(J)$ that is greedy with respect to the ordering.  The basic idea is that at each step $\ell$ of the procedure, we assign the greedy value to $y_{i_r}$ for the next $i_r\in J_+$ (if any remain) and let $k_{\ell}=i_r$, provided this assigns $y_{i_r}$ the same value as $\bar{x}_{i_r}$.  Otherwise, we assign the greedy value to $y_{j_s}$ for the next $j_s\in J_-$ and let $k_{\ell}=j_s$.  If no indices $j_s$ remain in $J_-$, we assign the greedy value to $y_{i_r}$ regardless of whether it agrees with $\bar{x}_{i_r}$.  The precise algorithm appears in Fig.~\ref{fig:impliedorder}.

As an example, suppose $\bar{x}=(v_2,v_3,v_4,v_7,v_6,v_1,v_5)$, $J_+=\{1,3,6,7\}$, and $J_-=\{4,5\}$.  Thus $\bar{x}(J)=(\bar{x}_1,\bar{x}_3,\bar{x}_4,\bar{x}_5,\bar{x}_6,\bar{x}_7)=(v_2,v_4,v_7,v_6,v_1,v_5)$.  Based on the values in $\bar{x}(J)$, we order the contents of $J_+$ so that $J_+=\{i_1, \ldots, i_4\}=\{6,1,3,7\}$.  Similarly, $J_-=\{j_1,j_2\}=\{4,5\}$.  The progress of the algorithm appears in \mbox{Table~\ref{ta:impliedorder}}.  Note that when $\ell=4$, we first consider assigning $v_{\min}$ to $y_{i_r}$.  But this results in $y_7=v_3$, which deviates from $\bar{x}$ because $\bar{x}_7=v_5$.  We therefore assign $v_{\max}$ to $y_{j_s}$, which yields $y_4=v_7$.  When $\ell=5$, we again consider assigning $v_{\min}$ to $y_{i_r}$, but because $v_{\min}$ has changed, we now obtain an assignment $y_7=v_5$ that agrees with $\bar{x}$.  When $\ell = 6$, the indices in $J_+$ are exhausted, and we therefore assign $v_{\min}$ to $y_{j_s}$, so that $y_5=v_6$.  The resulting $y(J)$ is identical to $\bar{x}(J)$, and the implied ordering is $(k_1, \ldots, k_6)=(6,1,3,4,5,7)$.

\bigskip
\noindent
{\bf Proof of Theorem~\ref{th:greedycomplete}.} Let $\bar{x}(J)$ be a $J$-circuit that is undominated with respect to $(J_+,J_-)$.  Let $J_+=\{i_1, \ldots,i_p\}$ where $\bar{x}_{i_1} < \cdots < \bar{x}_{i_p}$, and let $J_-=\{j_1, \ldots, j_q\}$ where $\bar{x}_{j_1} > \cdots > \bar{x}_{j_q}$.

Let $k_1, \ldots, k_m$ be the implied ordering for $\bar{x}$ with respect to $(J_+,J_-)$ as computed above, and let $(y_{k_1}, \ldots, y_{k_m})$ be the
greedy solution with respect to this ordering. We claim that $\bar{x}_{k_{\ell}}=y_{k_{\ell}}$ for $\ell=1,
\ldots, m$, which suffices to prove the theorem.  Supposing to the
contrary, let $\bar{\ell}$ be the smallest index for which
$\bar{x}_{k_{\bar{\ell}}}\neq y_{k_{\bar{\ell}}}$.  Clearly
$\bar{x}_{k_{\bar{\ell}}}\prec y_{k_{\bar{\ell}}}$ is
inconsistent with the greedy choice, because
$\bar{x}_{k_{\bar{\ell}}}$ is available when
$y_{k_{\bar{\ell}}}$ is assigned a value. Thus
we have $\bar{x}_{k_{\bar{\ell}}} \succ y_{k_{\bar{\ell}}}$

\begin{figure}[t]
\centering
\fbox{
\parbox[c]{6in}{
\begin{tabbing}
xxx \= xxx \= xxx \= xxx \= \kill
Let $V = \{v_1, \ldots, v_n\}$. \\
Let $J_+ = \{i_1, \ldots, i_p\}$ where $\bar{x}_{i_1} < \cdots < \bar{x}_{i_p}$. \\
Let $J_- = \{j_1, \ldots, j_q\}$ where $\bar{x}_{j_1} > \cdots > \bar{x}_{j_q}$. \\
Let $r=1$ and $s=1$. \\
For $\ell = 1, \ldots, m$: \\
\> Let $v_{\min}$ be the smallest value in $V$ such that setting $y_{i_r}=v_{\min}$ \\
\> \> creates no cycle with the elements of $y$ assigned so far. \\
\> Let $v_{\max}$ be the largest value in $V$ such that setting $y_{j_s}=v_{\max}$ \\
\> \> creates no cycle with the elements of $y$ assigned so far. \\
\> If $r\leq p$ and ($\bar{x}_{i_r}=v_{\min}$ or $s>q$) then \\
\> \> Let $k_{\ell}=i_r$, $y_{i_r}=v_{\min}$, and $r=r+1$. \\
\> \> Remove $v_{\min}$ from $V$. \\
\> Else \\
\> \> Let $k_{\ell}=j_s$, $y_{j_s}=v_{\max}$, and $s=s+1$. \\
\> \> Remove $v_{\max}$ from $V$.
\end{tabbing}
}
}
\vspace{0ex} \caption{Algorithm for generating an implied ordering $k_1, \ldots, k_m$ for $J$-circuit $\bar{x}(J)$ with respect to $(J_+,J_-)$, where $m=|J|$.  The resulting \mbox{$J$-circuit} $y(J)$ is greedily constructed with respect to the ordering $k_1, \ldots, k_m$ and $(J_+,J_-)$.  The algorithm is used to help prove Theorem~\ref{th:greedycomplete}, not to identify undominated \mbox{$J$-circuits} or construct facets.} \label{fig:impliedorder}
\vspace{2ex}
\end{figure}

\begin{table}
\caption{Computation of the implied ordering for $\bar{x}=(v_2,v_3,v_4,v_7,v_6,v_1,v_5)$, where $J_+=\{1,3,6,7\}$ and $J_-=\{4,5\}$ (indicated by the the signs above $\bar{x}$).}
\label{ta:impliedorder}
\vspace{1ex}
\begin{center}
$
\begin{array}{c@{\hspace{3ex}}c@{\hspace{2ex}}c@{\hspace{3ex}}c@{\hspace{1ex}}c@{\hspace{2ex}}cc@{\hspace{5ex}}ccccccc@{\hspace{3ex}}c}
     &   &   &     &     &          &          & +   &     & +   & -   & -   & +   & +   &  \vspace{-.5ex} \\
     &   &   &     &     &          & \hspace{4ex} \bar{x}= \hspace{-4ex}
                                               & v_2 & v_3 & v_4 & v_7 & v_6 & v_1 & v_5 & \\
\ell & r & s & i_r & j_s & v_{\min} & v_{\max} & y_1 & y_2 & y_3 & y_4 & y_5 & y_6 & y_7 & k_{\ell} \\
\hline
1    & 1 & 1 & 6   & 4   & v_1      & v_7      &     &     &     &     &     & v_1 &     & 6 \\
2    & 2 & 1 & 1   & 4   & v_2      & v_7      & v_2 &     &     &     &     & v_1 &     & 1 \\
3    & 3 & 1 & 3   & 4   & v_4      & v_7      & v_2 &     & v_4 &     &     & v_1 &     & 3 \\
4    & 4 & 1 & 7   & 4   & v_3      & v_7      & v_2 &     & v_4 & v_7 &     & v_1 &     & 4 \\
5    & 4 & 2 & 7   & 5   & v_5      & v_6      & v_2 &     & v_4 & v_7 &     & v_1 & v_5 & 5 \\
6    & 5 & 2 &     & 5   &          & v_6      & v_2 &     & v_4 & v_7 & v_6 & v_1 & v_5 & 7 \\
\hline
\end{array}
$
\end{center}
\vspace{4ex}
\end{table}

By hypothesis, $\bar{x}$ is undominated with respect to $(J_+\cup
J_-)$.  We therefore have $\bar{x}_{k_{\ell}} \prec
y_{k_{\ell}}$ for some $\ell\in\{\bar{\ell}+1, \ldots, m\}$.
Let $\hat{\ell}$ be the smallest such index.  Then there are two
cases: (1) $k_{\bar{\ell}}$ and $k_{\hat{\ell}}$ are both in $J_+$
or both in $J_-$, or (2) they are in different sets.
\bigskip

Case 1: $k_{\bar{\ell}}$ and $k_{\hat{\ell}}$ are both in $J_+$ or
both in $J_-$.  We will suppose that both are in $J_+$.  The
argument is similar if both are in $J_-$.

Let $t$ be the index such that $i_t=k_{\bar{\ell}}$, and $u$ the
index such that $i_u=k_{\hat{\ell}}$.  Then
$\bar{x}_{i_t}>y_{i_t}$ because $\bar{x}_{i_t}\succ
y_{i_t}$ and $i_t\in J_+$.  Let $t'$ be the largest index in
$\{t, \ldots, u-1\}$ such that $\bar{x}_{i_{t'}} >
y_{i_{t'}}$.  We know that $t'$ exists because $\bar{x}_{i_t}
> y_{i_t}$.  Thus we have two sequences of values related as
follows:
\[
\begin{array}{c@{\;}c@{\;}c@{\;}c@{\;}c@{\;}c@{\;}c@{\;}c@{\;}c@{\;}c@{\;}c@{\;}c@{\;}c@{\;}c@{\;}c@{\;}c@{\;}c@{\;}c@{\;}c}
\bar{x}_{i_1} & < & \cdots & < & 
\bar{x}_{i_{t-1}} & < & \bar{x}_{i_t} & < 
& \cdots & < & \bar{x}_{i_{t'-1}} & < & 
\bar{x}_{i_{t'}} & < & \cdots & < & 
\bar{x}_{i_{u-1}} & < & \bar{x}_{i_u} \\
$\rotatebox[origin=c]{270}{$=$}$ &   &        &   & 
$\rotatebox[origin=c]{270}{$=$}$ &   & $\rotatebox[origin=c]{270}{$>$}$ &   & 
  &   & $\rotatebox[origin=c]{270}{$\geq$}$ &   &  
$\rotatebox[origin=c]{270}{$>$}$ &   &    &   & 
$\rotatebox[origin=c]{270}{$\geq$}$ &      & $\rotatebox[origin=c]{270}{$<$}$  \\
y_{i_1} &   & \cdots &   & y_{i_{t-1}} &   &
y_{i_t} &   & \cdots &   & y_{i_{t'-1}} &   &
y_{i_{t'}} &   & \cdots &   & y_{i_{u-1}} &    &
y_{i_u}
\end{array}
\]

We first show that value $\bar{x}_{i_u}$ has not yet been assigned
in the greedy algorithm when $y_{i_u}$ is assigned a value.  That is, we show that $\bar{x}_{i_u}\not\in
\{y_{i_1}, \ldots, y_{i_{u-1}}\}$ and
$\bar{x}_{i_u}\not\in \{y_{j_1}, \ldots, y_{j_{u'}}\}$.
To see that $\bar{x}_{i_u}\not\in \{y_{i_1}, \ldots,
y_{i_{u-1}}\}$, suppose to the contrary that
$\bar{x}_{i_u}=y_{i_{w}}$ for some $w\in \{1, \ldots, u-1\}$.
This is impossible, because $\bar{x}_{i_u}>\bar{x}_{i_w}\geq
y_{i_w}$.  Also $\bar{x}_{i_u}\not\in \{y_{j_1}, \ldots,
y_{j_{u'}}\}$, because assigning value $\bar{x}_{i_u}$ to
$y_{j_w}$ for some $w\in \{1, \ldots, u'\}$ contradicts the greedy
construction of $y$, due to the fact that value
$y_{i_u}$ was available at that time and is a superior choice.

We next show that value $\bar{x}_{i_{t'}}$ has not yet been assigned
in the greedy algorithm when $y_{i_u}$ is assigned a value.  That is, we show that \mbox{$\bar{x}_{i_{t'}}\not\in
\{y_{i_1}, \ldots, y_{i_{u-1}}\}$} and
$\bar{x}_{i_{t'}}\not\in \{y_{j_1}, \ldots,
y_{j_{u'}}\}$.  To begin with, we have that
$\bar{x}_{i_{t'}}\not\in \{y_{i_1}, \ldots,
y_{i_{t'-1}}\}$, by virtue of the same reasoning just applied
to $\bar{x}_{i_u}$.  Also $\bar{x}_{i_{t'}}\neq y_{i_{t'}}$,
since by hypothesis $\bar{x}_{i_{t'}} > y_{i_{t'}}$.  To show
that $\bar{x}_{i_{t'}}\not\in \{y_{i_{t'+1}}, \ldots,
y_{i_{u-1}}\}$, suppose to the contrary that $\bar{x}_{i_{t'}}
= y_{i_w}$ for some $w\in \{t'+1, \ldots, u-1\}$.  Then since
$\bar{x}_{i_{t'}} < \bar{x}_{i_w}$, we must have $\bar{x}_{i_w} >
y_{i_w}$.  But this contradicts the definition of $t'$ ($< w$)
as the largest index in $\{1, \ldots, u-1\}$ such that
$\bar{x}_{i_{t'}} > y_{i_{t'}}$.  Thus $\bar{x}_{i_{t'}} \neq
y_{i_w}$.  Finally, $\bar{x}_{i_{t'}}\not\in \{y_{j_1},
\ldots, y_{j_{u'}}\}$ because assigning value
$\bar{x}_{i_{t'}}$ to $y_{j_w}$ for some $w\in \{1, \ldots, u'\}$
contradicts the greedy construction of $y$, due to the fact
that $y_{i_u}$ was available at the time and $y_{i_u} >
\bar{x}_{i_u} > \bar{x}_{i_{t'}}$.

Because $\bar{x}_{i_u} < y_{i_u}$ and value $\bar{x}_{i_u}$ has
not yet been assigned, setting \mbox{$y_{i_u} = \bar{x}_{i_u}$} must create
a cycle in $y$, because otherwise setting $y_{i_u} = \bar{x}_{i_u}$
would have been the greedy choice.  Also, setting
$y_{i_u}=\bar{x}_{i_{t'}}$ was not the greedy choice because
$y_{i_u} > \bar{x}_{i_u} > \bar{x}_{i_{t'}}$.  Thus setting
$y_{i_u}=\bar{x}_{i_{t'}}$ must likewise create a cycle in
$y$, because $\bar{x}_{i_{t'}}$ has not yet been assigned. Now
define $G_{y(J)}$ as before and consider the maximal subchain
in $G_{y(J)}$ that contains $y_{i_u}$.  Let the segment
of the subchain up to $y_{i_u}$ be
\[
v_z \rightarrow \cdots \rightarrow v_{i_u} \rightarrow y_{i_u}
\]
Because setting $y_{i_u}=\bar{x}_{i_u}$ creates a cycle in
$y$, we must have $\bar{x}_{i_u} = v_z$.  Similarly,
because setting $y_{i_u}=\bar{x}_{i_{t'}}$ creates a cycle in
$y$, we must have $\bar{x}_{i_{t'}} = v_z$.  This implies
$\bar{x}_{i_u}=\bar{x}_{i_{t'}}$, which is impossible because
$\bar{x}_{i_u}>\bar{x}_{i_{t'}}$.
\bigskip

Case 2: $k_{\bar{\ell}}\in J_+$ and $k_{\hat{\ell}}\in J_-$, or
$k_{\bar{\ell}}\in J_-$ and $k_{\hat{\ell}}\in J_+$.  We can rule
out the latter subcase immediately, because $k_{\bar{\ell}}$ can be
in $J_-$ only if $r>p$ when $y_{k_{\bar{\ell}}}$ is assigned
a value.  This means $k_{\hat{\ell}}$ must be in
$J_-$ as well, because $y_{k_{\hat{\ell}}}$ is assigned a value after
$y_{k_{\bar{\ell}}}$ is assigned a value, and the situation reverts to Case 1.  We
therefore suppose $k_{\bar{\ell}}\in J_+$ and $k_{\hat{\ell}}\in
J_-$.

Let $t$ be the index such that $i_t=k_{\bar{\ell}}$, and $u$ the
index such that $j_u=k_{\hat{\ell}}$.  Again $\bar{x}_{i_t} >
y_{i_t}$ because $\bar{x}_{i_t} \succ y_{i_t}$ and
$j_t\in J_+$.  Thus, at the time value $y_{i_t}$ was assigned
a value, we had $\bar{x}_{j_s}<v_{\max}$ for the current value
of $s$.  So we have two sequences of values related as follows:
\begin{equation}
\begin{array}{c@{\;}c@{\;}c@{\;}c@{\;}c@{\;}c@{\;}c@{\;}c@{\;}c@{\;}c@{\;}c@{\;}c}
\bar{x}_{j_1} & > & \cdots & > & 
\bar{x}_{j_{s-1}} & > & \bar{x}_{j_{s}} & > & 
\cdots & \bar{x}_{j_{u-1}} & > & \bar{x}_{j_u} \\
$\rotatebox[origin=c]{270}{$=$}$ &   &   &   & 
$\rotatebox[origin=c]{270}{$=$}$ &   & $\rotatebox[origin=c]{270}{$\leq$}$ &   &        & 
$\rotatebox[origin=c]{270}{$\leq$}$ &   & $\rotatebox[origin=c]{270}{$>$}$ \\
y_{j_1} &   & \cdots &   & y_{j_{s-1}} &   &
y_{j_{s}} &   & \cdots & y_{j_{u-1}} &   & y_{j_u}
\end{array} \label{eq:J+}
\end{equation}
where $v_{\max}> \bar{x}_{j_s}$.  Let $t'$ be the largest index for
which $y_{i_{t'}}$ has been assigned a value at the time
$y_{j_u}$ is assigned a value.  We have two sequences of
values related as follows:
\[
\begin{array}{c@{\;}c@{\;}c@{\;}c@{\;}c@{\;}c@{\;}c@{\;}c@{\;}c@{\;}c@{\;}c@{\;}c@{\;}c@{\;}c@{\;}c@{\;}c@{\;}c@{\;}c@{\;}c}
\bar{x}_{i_1} & < & \cdots & < & 
\bar{x}_{i_{t-1}} & < & \bar{x}_{i_t} & < & 
\cdots & < & \bar{x}_{i_{t'}} \\
$\rotatebox[origin=c]{270}{$=$}$ &   &        &   & 
$\rotatebox[origin=c]{270}{$=$}$ &   & $\rotatebox[origin=c]{270}{$>$}$ &   &        &   & \\
y_{i_1} &   & \cdots &   & y_{i_{t-1}} &   &
y_{i_t} &   & \cdots &   & y_{i_{t'}}
\end{array}
\]
We first show that a cycle must be created if value $\bar{x}_{j_u}$
is assigned to $y_{j_u}$.  Because
$y_{j_u}<\bar{x}_{j_u}$, it suffices to show that value
$\bar{x}_{j_u}$ has not yet been assigned in the greedy algorithm
when $y_{j_u}$ is assigned a value.  That is, we show
that $\bar{x}_{j_u}\not\in \{y_{j_1}, \ldots,
y_{j_{u-1}}\}$ and $\bar{x}_{j_u}\not\in \{y_{i_1},
\ldots, y_{i_{t'}}\}$.  If $\bar{x}_{j_u}=y_{j_{w}}$ for
some $w\in \mbox{$\{1, \ldots, u-1\}$}$, then
$\bar{x}_{j_u}<\bar{x}_{j_w}\leq y_{j_w}$, which is
impossible.  Thus $\bar{x}_{j_u}\not\in \{y_{j_1}, \ldots,
y_{j_{u-1}}\}$.  Also $\bar{x}_{j_u}\not\in \{y_{i_1},
\ldots, y_{i_{t'}}\}$, because assigning value $\bar{x}_{j_u}$
to $y_{i_w}$ for some $w\in \{1, \ldots, t'\}$ contradicts the
greedy construction of $y$, due to the fact that value
$y_{j_u}$ was available at that time and is a superior choice.

We next show that a cycle must be created if value $v_{\max}$ is assigned to $y_{j_u}$.  Note that
$v_{\max}\not\in \{y_{i_1}, \ldots, y_{i_{t'}}\}$,
because assigning value $v_{\max}$ to $y_{i_w}$ for some $w\in \{1,
\ldots, t'\}$ contradicts the greedy construction of $y$, due
to the fact that value $y_{j_u}$ was available at that time
and is a superior choice because
$v_{\max}>\bar{x}_{j_s}>\bar{x}_{j_u}$.  Now suppose, contrary to
the claim, that assigning $v_{\max}$ to $y_{j_u}$ does not create a
cycle.  Then since $v_{\max}>y_{j_u}$, the value $v_{\max}$
must have already been assigned in the greedy algorithm at the time
$y_{j_u}$ is assigned a value.  This implies $v_{\max}\in
\{y_{j_s}, \ldots, y_{j_{u-1}}\}$.  But in this case we
must have $y_{j_s}=v_{\max}$, because assigning $v_{\max}$ to
$y_{j_s}$ does not create a cycle and, by definition, is the most
attractive choice at the time.  Thus (\ref{eq:J+}) becomes
\[
\begin{array}{c@{\;}c@{\;}c@{\;}c@{\;}c@{\;}c@{\;}c@{\;}c@{\;}c@{\;}c@{\;}c@{\;}c@{\;}c@{\;}c@{\;}c@{\;}c@{\;}c@{\;}c@{\;}c}
\bar{x}_{j_1} & > & \cdots & > & \bar{x}_{j_{s-1}} & > & \bar{x}_{j_s} & > & \cdots & > & \bar{x}_{j_{s'-1}} & > & \bar{x}_{j_{s'}} & > & \cdots & > & \bar{x}_{j_{u-1}} & > & \bar{x}_{j_u} \\
$\rotatebox[origin=c]{270}{$=$}$ &   &   &   & $\rotatebox[origin=c]{270}{$=$}$ &   & $\rotatebox[origin=c]{270}{$<$}$ &   &  &   & $\rotatebox[origin=c]{270}{$\leq$}$ &   & $\rotatebox[origin=c]{270}{$<$}$ &   &   &   & $\rotatebox[origin=c]{270}{$\geq$}$ &   & $\rotatebox[origin=c]{270}{$<$}$  \\
y_{j_1} &   & \cdots &   & y_{j_{s-1}} &   &
y_{j_s} &   & \cdots &   & y_{j_{s'-1}} &   &
y_{j_{s'}} &   & \cdots &   & y_{j_{u-1}} &   &
y_{j_u}
\end{array}
\]
where $y_{j_s} = v_{\max}$ and where $s'$ is the largest index
in $\{s, \ldots, u-1\}$ such that
$y_{j_{s'}}<\bar{x}_{j_{s'}}$.  Now we can argue as in Case 1
that assigning $\bar{x}_{j_u}$ to $y_{j_u}$ creates a cycle, and
assigning $\bar{x}_{j_{s'}}$ to $y_{j_u}$ creates a cycle, which
implies $\bar{x}_{j_{s'}}=\bar{x}_{j_u}$, a contradiction because
$\bar{x}_{j_{s'}}>\bar{x}_{j_u}$.  We conclude that assigning
$v_{\max}$ to $y_{j_u}$ creates a cycle.

Having shown that assigning $\bar{x}_{j_u}$ to $y_{j_u}$ creates a
cycle, and assigning $v_{\max}$ to $y_{j_u}$ creates a cycle, we
derive as in Case 1 that $v_{\max}=\bar{x}_{j_u}$, a contradiction
because $v_{\max}\geq \bar{x}_{j_s}> \bar{x}_{j_u}$.  The theorem
follows.  
$\Box$ \medskip

%
%





\end{document}